\newtheorem{proposition}{Proposition}
\newtheorem{definition}{Definition}
\newtheorem{theorem}{Theorem}
\newcommand{\cc}{\mathbb{C}}
\newcommand{\rr}{\mathbb{R}}
\newcommand{\hh}{\mathbb{H}}
\newcommand{\te}{{\mathbb{T}}}
\newcommand{\tes}{{{\mathbb{T}}^L}}
\newcommand{\tesR}{{{\mathbb{T}}^R}}
\newcommand{\oo}{\mathbb{O}}
\newcommand{\BB}{\mathbb{B}}
\newcommand{\ip}{{\,\rm i\,}}
\newcommand{\ipp}{{\,\rm J\,}}
\newcommand{\w}{{\rm w}}
\newcommand{\z}{\mathbb{Z}}
\newcommand{\nn}{\nonumber}
\newcommand{\ba}{\left[\begin{array}}
\newcommand{\ea}{\end{array}\right]}
\newcommand{\cdote}{\!\cdot\! }
\begin{document}

\pagestyle{myheadings} \markright{\hfill L. A. Wills-Toro: Classification of some graded division algebras I\ \ \ \ \ \ \ }

\begin{titlepage}
\title{{\bf Classification of some graded\\ not necessarily associative division algebras I}}

\author{Luis Alberto {\bf Wills-Toro}$^{1,2}$\\
~\\
1. School
 of Mathematics, Universidad Nacional de Colombia,\\ Medell\'{\i}n,
 Colombia, e--mail: lawillst@unal.edu.co\\
2. Dept. of Math. and Stat.,
 American University of
Sharjah,\\ P.O.Box 26666, Sharjah
 U.A.E.}
\date{}
\maketitle

\thispagestyle{empty}

\begin{abstract}
We study  not necessarily associative (NNA) division algebras over the reals. We classify in this paper series
those that admit a grading over a finite group
$G$, and have a basis $\{v_g|g\in G\}$ as a real vector space, and
the product of these basis elements respects the grading and
includes a scalar structure constant with values only in $\{1,-1\}$.
We classify here those graded by an abelian group
$G$ of order $|G|\leq 8$ with $G$ non--isomorphic to $\z/8\z$. We will find the complex,
 quaternion, and octonion algebras, but also a remarkable set of
novel non--associative division algebras.

~\\
{\bf Keywords:} Division algebras, graded algebras, twisted group
algebras, non--associative algebras, non--commutative algebras.\\
{\bf AMS-MSC: 17A35, 17A01, 16S35, 17B30. }
\end{abstract}
\end{titlepage}

 \pagenumbering{arabic}
\section{Introduction}

We initiate the classification of some
not necessarily associative (NNA) division algebras over $\rr$.
We recover in the process the complex, quaternion, and
octonion algebras a set of novel
NNA division $\rr$--algebras. We use $\z_n:=\z/n\z$.

\begin{center}
\setlength{\unitlength}{1cm}
\begin{picture} (5.5,3)
\put(0.4,2.6){$\z_8$}
\put(2,2.6){$\z_2\times\z_4$}
\put(3.8,2.6){$\z_2\times\z_2\times\z_2$}
\put(0.8,2.5){\line(2,-1){.6}}
\put(2.5,2.5){\line(-2,-1){.6}}
\put(2.7,2.5){\line(2,-1){.6}}
\put(4.4,2.5){\line(-2,-1){.6}}
\put(1.5,1.9){$\z_4$}
\put(3,1.9){$\z_2\times\z_2$}
\put(1.7,1.8){\line(2,-1){.6}}
\put(3.4,1.8){\line(-2,-1){.6}}
\put(2.4,1.2){$\z_2$}
\put(2.6,0.8){\line(0,1){.3}}
\put(2.3,0.5){$\,\{e\}$}
\put(2,0){Graph I}
\end{picture}
\begin{picture} (6.5,3)
\put(1.1,2.6){$?$}
\put(2.0,2.6){$\BB_1^R,\BB_3^R,\BB_2^R,\BB_4^R$}
\put(4.9,2.6){$\oo$}
\put(1.4,2.5){\line(2,-1){.6}}
\put(3.1,2.5){\line(-2,-1){.6}}
\put(3.3,2.5){\line(2,-1){.6}}
\put(4.9,2.5){\line(-2,-1){.6}}
\put(2.0,1.9){$\tesR$}
\put(4,1.9){$\hh$}
\put(2.3,1.8){\line(2,-1){.6}}
\put(4,1.8){\line(-2,-1){.6}}
\put(3.0,1.2){$\,\cc$}
\put(3.2,0.8){\line(0,1){.3}}
\put(3.0,0.4){$\,\rr$}
\put(2.6,0){Graph II}
\end{picture}
\end{center}
Graph II presents some of the findings of this paper. With the exception of $\z_8$, the abelian groups in Graph I provide the grading for the NNA division algebras we consider here. It includes each group and their largest proper subgroups (lower included in the upper when connected by a bar). The Graph II presents the corresponding  isomorphism classes of finite dimensional NNA division $\rr$--algebras  and their maximal proper subalgebra relations (lower included in the upper when connected by a bar). Again, although multiple subalgebras might be included, it illustrates only their isomorphism classes. The typology of the classified finite dimensional  NNA division $\rr$--algebras in Graph II is constrained to those  algebras ${\cal A}$ that admit a grading over a finite group
$G$, and have a basis $\{v_g|g\in G\}$ as a real vector space, and
the product of these basis elements respects the grading and
includes a scalar structure constant $C_{\cal A}$ with values only in $\{1,-1\}$. Such $\rr$--algebras will be called twisted group algebras $({\cal A}; G,\rr,\{1,-1\},C_{\cal A})$. With the exception of $\BB_1^R$ and $\BB_3^R$, all the algebras in Graph II are $\rr$--algebra isomorphic to their own opposite algebras. $\BB_1^R$ is isomorphic to the opposite algebra of $\BB_3^R$. The NNA division algebras $({\cal A}; G,\rr,\{1,-1\},C_{\cal A})$ graded by nonabelian groups of order 8 are addressed in \cite{PapII}. Those graded by $\z_8$ (question mark in Graph II) are addressed in \cite{PapIII}.

\begin{definition} A unital not--necessarily--associative (NNA)  ring $R$
with multiplicative identity element $1$ is
called a {\bf NNA division ring} if for every $v\neq0,\
v\in R$ the
left--multiplication  map $x\mapsto v\cdot x$, and the
right--multiplication map $x\mapsto x\cdot v$  are bijections.
 If this NNA division ring with its summation and product is also an algebra over a field
$K$ we have a {\bf NNA
 division $K$--algebra}. The NNA structures are called {\bf nonassociative} with the understanding that they include both  associative and  not associative cases \cite{Schafer}.\end{definition}

\begin{definition} Let $A\subset K^*=K-\{0\}$ be a multiplicative subgroup
of $K^*$, with  $K$ a field, and let $G$ be a finite group of order $|G|$ with identity
element $e$. Given a function
$ C: G\times G\to A\subset K^*$
 we call $C$ a {\bf structure constant  of $G$ in $A$}.  $C$ is {\bf
unital} if additionally
$ C(e,g)=C(g,e)=1\ \forall g\in G$. We present frequently the structure constant as an array of
numbers in $A$, with matrix labels in $G$.
\end{definition}

\begin{definition} Let $C$ be a unital
structure constant of $G$ in $A$. We define a {\bf $C$--loop extension of $G$}, denoted $A\times_C G$,  as the set $A\times G$ endowed with
the not--necessarily--associative binary operation (product):
\begin{equation} (\alpha,g)\cdot (\beta,h)=(\alpha\beta\,
C(g,h),\,gh).\label{(2o)}\end{equation}\end{definition}

Every $C$--loop extension is a loop (quasi-group with identity
$(1,e)$). We will study here the case where $K=\rr$ and
$A=\{1,-1\}\cong \z/2\z\equiv \z_2$. In this case, the $C$--loop extension  $A\times_C G$  has $2|G|$ elements, and  it constitutes a
non--necessarily--associative extension of discrete finite
groups. The representation theory of $C$--loop extensions and
some not--necessarily--associative semisimple algebras leads to
the study and classification of a certain kind of
  not--necessarily--associative  division algebras, which will be the
scope of this  paper series.

We are interested on classifying
some twisted group $K$--algebras with finite grading group with particular properties.

\begin{definition} Let $G$ be a finite group with identity
$e$. A {\bf twisted group $K$--algebra ${\cal A}$} is a NNA $K$--algebra with unit $1\in {\cal A}$ that
as a $K$--vector space:
\begin{equation} {\cal A}=\bigoplus_{g\in G} W_g\ \
{\rm where}\ \ \dim_K W_g=1,\ {\rm and\ }\ W_g\cdot W_h\subset W_{gh},\nn
\end{equation}
and for every choice of base elements $v_g\in W_g$ for each $g\in G,\  g\neq e$, and $v_e=1$, there exists a  unital structure constant $C$ for $G$ in $K^*$, so that
$ v_g\cdot v_h=C(g,h)v_{gh}$, for all $g,h\in G$.
We present now a definition that emphasizes the existence of structure constants with remarkable characteristics:\\
Let $C$ be a unital structure constant of $G$ in $A\subset
K^*$. Let ${\cal A}=K_C\,G$ be a unital graded  NNA
algebra over a field $K$, which has a basis $\{v_g:
g\in G\}$ as a $K$--vector space, and  a ({\bf $C$-twisted}) product that
extends bi-linearly from
\begin{equation} v_a\cdot v_b=
C(a,b)\; v_{ab}, \ \ \forall a,b\in G.\label{(0)}\end{equation}
We
call such  an algebra  ${\cal A}=K_C\,G$ or $({\cal A}; G,K, A,C)$
a {\bf twisted group algebra over $K$}. Clearly, the
multiplicative neutral element of ${\cal A}$ is $v_e$.
$G$ is called the {\bf grading group of} $K_C\,G$.
In the case where $C$ is constant, that is $C(a,b)=1,\  \forall
a,b\in G$, the algebra $K_C\,G$ coincides with the group algebra
$KG$.\end{definition}

Our definition highlights the existence of a basis for which the structure
constant has particular properties: it takes values in $A\subset K^*$. In the case $K=\rr$ we observe that $\{1,-1\}$ is the only finite subgroup of  its multiplicative group $\rr^*=\rr-\{0\}$.
Concretely, we want to classify the twisted group algebras $({\cal
A}; G,K, A,C)=({\cal A}; G,\rr, \{1,-1\},C)$ which are
NNA  division $\rr$--algebras.
We have thus a finite grading group $G$ and a basis in which the
structure constant entries can only take values in
$\{1,-1\}$. This classification is a finite problem, since there
are only finitely many structure constant choices to make.

We remark also that although the twisted group algebra has a natural grading
structure, we will classify the algebras using  {\bf plain $\rr$--algebra isomorphisms} as far as we can.
There are  of course graded algebra isomorphisms which are more restrictive since they need to respect fixed grading assignments.

H. Hopf proved in \cite{Hopf} that every NNA
finite--dimensional division
algebra over $\rr$ has dimension a power of 2. We will
reproduce in an elementary fashion this result for the twisted
group algebras we are considering. M. Kervaire  in \cite{Kervaire} and
R. Bott and J. Milnor  in \cite{BottMilnor} independently proved that
every not--necessarily--associative finite--dimensional division algebra
over the reals
has dimensions 1, 2, 4, or 8. For some recent developments on  NNA
division algebras and twisted group algebras see  \cite{Pumpluen}, \cite{Darpo},
 \cite{PumplAst}, \cite{Darpo1}, \cite{AHK}, \cite{WC}, \cite{GMBandJMO},  \cite{Amitsur}, \cite{Rowen}, \cite{Albert}, \cite{Dickson}, \cite{Karrer},
\cite{EdLew}, \cite{EdLew1}, \cite{Passman}, \cite{MollNach},  and \cite{Bales}. We complement  with this paper series some explorations using some generalizations of the Cayley--Dickson doublings in \cite{WC}, \cite{PumplAst}, \cite{Pumpluen} in the case where the NNA division $\rr$--algebra is a twisted
group algebra  $({\cal A}; G,\rr, \{1,-1\},$ $C)$. Our explorations has  no constraint on the  type of generalized Cayley--Dickson doubling that might arise.
We will single out some NNA division $\rr$--algebras among the ones already found by other authors, and we study some of their astonishing properties. But we will also find some  novel NNA division $\rr$--algebras which are not among the families previously studied.

In this paper we  explore the zero divisors in twisted
group algebras  $({\cal A}; G,\rr$, $\{1,-1\}, C)$, and  define standard bases,  diagonal conjugations and involutions
for those algebras as
$\rr$--vector spaces. Then  we
proceed by the order of the grading group classifying the twisted group
algebras $({\cal A}; G,\rr, \{1,-1\},$ $C)$ which are NNA division $\rr$--algebras with
abelian grading group $G\not\simeq\z_8$.

\section{Zero divisors in twisted group algebras}
Let $x,y\in ({\cal A};G,\rr ,\{1,-1\},C)$ with basis $\{v_g:
g\in G\}$,
\begin{equation} x=\sum_{a\in H\subset G}x_a\,v_a,\ \ y=\sum_{b\in
H\subset G}y_b\,v_b,\label{SubAlgEls}\end{equation} where $H$ is a
subgroup of $G$. We call the real coefficients $x_a$ and $y_a$ the
{\bf components} of $x$ and $y$ respectively.
The product $x\cdot y$ can be rewritten as:
\begin{equation} x\cdot y =\sum_{c,a\in H}[(C(a,a^{-1}c)\,y_{a^{-1}c})\,x_a]\,v_c
=\sum_{c,b\in
H}[(x_{cb^{-1}}\,C(cb^{-1},b))\,y_b]\,v_c.\label{ProdXY}
\end{equation}
Hence, the set of elements in
${\cal A}$ of the form (\ref{SubAlgEls}) for $H$ subgroup of $G$
constitute automatically a subalgebra ${\cal B}$ which is $H$--graded.
Clearly,  a zero divisor in a sub\-algebra implies a zero divisor in
the whole algebra.
From  (\ref{ProdXY}), there are zero
divisors in ${\cal B}$ if we obtain non--trivial (component) solutions to the
systems
\begin{equation}\sum_{a\in H}(R_{{\cal B}}(y))_{c,a}\,x_a=0,\
\sum_{b\in
H}(L_{{\cal B}}(x))_{c,b}\,y_b=0, \ \forall c\in H\subset G\nn\end{equation}
 where the matrices $R_{{\cal B}}(y)$ and $L_{{\cal B}}(x)$ (whose subindex ${\cal B}$ will be  omitted when ${\cal B}={\cal A}$) are respectively associated to the product by $y$ from the left and the product by $x$ from the right in the subalgebra ${\cal B}$. In components:  $\forall a,b,c\in H$
\begin{equation}(R_{{\cal B}}(y))_{c,a}:=C(a,a^{-1}c)\,y_{a^{-1}c},\ \ {\rm and}\ \
(L_{{\cal B}}(x))_{c,b}:=x_{cb^{-1}}\,C(cb^{-1},b).\label{DetMRML}\end{equation}
There are thus zero divisors in ${\cal B}$ if either there is a
solution $y\neq 0$ to $\det(R_{{\cal B}}(y))=0$ or there is a solution $x\neq 0$ to $\det(L_{{\cal B}}(x))=0$.
The determinants of the matrices in (\ref{DetMRML})   are homogeneous polynomials of order $|H|$ in
the non--zero components involved, and they are of order $|H|$ in each of the nonzero
components.  We adopt $y_g=x_g=1$
for all $g\neq e,\ g\in H$, and then the determinants  of $R_{{\cal B}}(y)$ and $L_{{\cal B}}(x)$ in
(\ref{DetMRML}) become polynomials of order $|H|$ in a single variable
$y_e$ and $x_e$ respectively. The coefficients for
the monomials  $y_e^{|H|}$ and $x_e^{|H|}$ have
absolute value $|\prod_{a\in H} C(a,e)|=|\prod_{b\in H}
C(e,b)|=1$, since the structure constant is unital. According to Cauchy's theorem, if a prime $p$ divides $|G|$ there exist a subgroup $H$ of order $p$ in $G$. If  such a $p$ is odd then the determinants just discussed will be polynomials of odd order in a single variable that have always non trivial roots, leading to zero divisors. The reals can viewed as an $\rr$--algebra
graded by the trivial group $G\!=\!\{e\}$, with $|G|\!=\!2^0\!=\!1$. Recall that if  ${\cal A}$ is finite dimensional over $\rr$, then ${\cal A}$ is a NNA division $\rr$--algebra if and
only if  it has no zero divisors. We just proved a particular case of Hopf's theorem  with elementary tools:
\begin{proposition} A twisted group algebra $({\cal A};G,\rr,\{1,-1\},C)$ with
$|G|$ not a power of 2 has zero divisors, and it it is not a NNA division algebra.
\end{proposition}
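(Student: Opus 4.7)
The plan is to combine the two elementary facts just recorded (Cauchy's theorem and the existence of a real root for odd-degree real polynomials) with the zero-divisor criteria~(\ref{(1)})--(\ref{(2)}). Since $|G|$ is not a power of $2$, some odd prime $p$ divides $|G|$; Cauchy's theorem then supplies an element $g\in G$ of order $p$, generating a cyclic subgroup $H=\langle g\rangle$ of order $p$. The observation following~(\ref{ProdY}) tells us that elements of the form~(\ref{SubAlgEls}) with components supported on $H$ form a subalgebra, and that a zero divisor in that subalgebra is automatically a zero divisor in ${\cal A}$; so it suffices to exhibit one inside it.

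Next, I would specialize the free coefficients as in the paragraph just preceding the statement: put $x_a=y_a=1$ for every $a\in H\setminus\{e\}$ and leave $x_e$ and $y_e$ as the only free parameters. With this choice, $\det(M^L)$ and $\det(M^R)$ become polynomials in the single real variables $y_e$ and $x_e$ respectively, and the key technical step is to verify that each has degree exactly $|H|=p$. Expanding $\det(M^L)=\sum_{\sigma}\mathrm{sgn}(\sigma)\prod_{c\in H}(M^L)_{c,\sigma(c)}$, the monomial $y_e^{p}$ arises only when $\sigma(c)^{-1}c=e$ for every $c$, forcing $\sigma=\mathrm{id}$ and leaving leading coefficient $\prod_{a\in H}C(a,e)=1$ by unitality of $C$. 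An identical calculation, now invoking $C(e,b)=1$, handles $\det(M^R)$.

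Finally, since $p$ is odd, fact~(ii) produces a real root $y_e^{*}$ of the polynomial $\det(M^L)\in\rr[y_e]$. The resulting $y=\sum_{a\in H}y_a v_a$ is nonzero (its $v_g$ component equals $1$ for each $g\neq e$ in $H$), and the vanishing of $\det(M^L)$ forces a nonzero $x$ with $x\cdot y=0$ via~(\ref{(1)}), yielding the desired zero divisor. The only mildly delicate point is the leading-coefficient verification in the determinant expansion; once that is in hand, the rest is a direct assembly of the machinery already set up in the section.
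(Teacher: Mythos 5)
Your proposal is correct and follows essentially the same route as the paper: Cauchy's theorem to get a cyclic subalgebra of odd prime order, the specialization $x_a=y_a=1$ for $a\neq e$ turning $\det(M^L)$ and $\det(M^R)$ into odd-degree polynomials with unit leading coefficient (by unitality of $C$), and the real-root fact to produce the zero divisor. Your explicit permutation-expansion check of the leading coefficient is just a more detailed version of the paper's observation, not a different argument.
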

 The next  proposition will prove efficient in our classification pursuit.
\begin{proposition} Let $({\cal A};G,\rr,\{1,-1\},C)$ be a twisted group algebra as well as a NNA division $\rr$--algebra,  $\{v_g\ : g\in G\}$ be the basis associated to the structure constant $C$,  $I$ be the identity matrix, $|g|$ be the order of $g\in G$,  $e$  be the identity in $G$. Let  $R(x)$ and $L(x)$ be the arrays associated to $x\in{\cal A}$ in (\ref{DetMRML}). Then:\\
(i) Let $e\neq g\in G$. Then  $L(v_g)^{|g|}=R(v_g)^{|g|}=-I$.\\
(ii) Let $t=0,\cdots, |h|-1$, $e\not\in\{ h,g\}\subset G,\ h\neq g$ and $|h^{-1-t}\,g\,h^t|>1$ (resp. $|h^t\,g\,h^{-1-t}|>1$). Then $(L(v_h)^{|h|-1-t}L(v_g)L(v_h)^t)^{|h^{-1-t}\,g\,h^t|}=-I$ (resp. $(R(v_h)^{|h|-1-t}R(v_g)R(v_h)^t)^{|h^t\,g\,h^{-1-t}|}=-I$).\\
(iii) Let $e\!\not\in\! \{h,g,f\}\!\subset\! G,  h\!\neq\! g\!\neq\! f\neq\! h$, $|h^{-1}g|\!>\!1$ (resp. $|g\,h^{-1}|\!>\!1$), and $|g^{-1}\,f|$ $>\!1$ (resp. $|f\,g^{-1}|\!>\!1$). Then
$((L(v_h)^{|h|-1}L(v_g))^{|h^{-1}g|-1}L(v_h)^{|h|-1}L(v_f))^{|g^{-1}f|}=-I$ (resp. $((R(v_h)^{|h|-1}R(v_g))^{|g\,h^{-1}|-1}R(v_h)^{|h|-1}R(v_f))^{|f\,g^{-1}|}=-I).$\\
\end{proposition}
\noindent {\bf Proof:} (i)  Let $g\neq e$  be of order $|g|$, and $v_g$ be in the basis associated to $C$. Then $(L(v_g))^{2|g|}=I$. Assume that $L(v_g)$ has an Eigenvalue $\lambda\in\rr$ with Eigenvector $x\neq 0$. Then  $(v_g-\lambda\;v_e)\cdot x=0$ leading to zero divisors. Then, $\lambda$ can not be real. Now, $(L(v_g))^{2|g|}-I\!=\!(L(v_g)-1)(L(v_g)+1)((L(v_g)^2)^{|g|-1}+\cdots+L(v_g)^2+1)=0$. Absence of zero divisors forces $(L(v_g)^2)^{|g|-1}+\cdots+L(v_g)^2+1=0$. Reuniting the terms of degree $e$ leads to $L(v_g)^{|g|}+1=0$. Similarly we verify $R(v_g)^{|g|}+1=0$.\\
(ii) From the hypotheses, $h^{-1}\!=\!h^{|h|-1}$, $(L(v_h)^{|h|-1-t}L(v_g)L(v_h)^t)^{2|h^{-1-t}\,g\,h^t|}\!=\!I$. Assume that $L(v_h)^{|h|-1-t}L(v_g)L(v_h)^t$ has an Eigenvalue $\lambda\in \rr$, with Eigenvector $x\!\neq\! 0$. Then $L(v_h)^{|h|-1-t}L(v_g)L(v_h)^t\,x=\lambda\, x=-\lambda\, L(v_h)^{|h|}x$, where we used (i). So $L(v_h)^{|h|-1-t}(L(v_g)+\lambda\,L(v_h))L(v_h)^t\,x=0$. This would lead to zero divisors. Then,  $\lambda$ cannot be real. Analogous arguments as in (i) lead to the result. Part (iii) is proven similarly using (ii) for $t=0$, and it can be extended easily for other values of $t$, and other placements of the factor including $L(v_f)$.$\ \Box$

\section{Standard bases}

All the groups of order 2, 4, and 8 are solvable and thus have composition series where all factors are $\z_2$. This allows for the  recursive definition of certain types of bases in the algebras we want to consider. This a rather technical definition, and the reader might skip it  and return to it upon need.
\begin{definition}\label{StandardBasis}
Let $G\neq \{e\}$ and $({\cal A}; G,\rr, A,$ $C)$ be a twisted group algebra with basis $\{v_g\ :
g\in G\}$ as an $\rr$-- vector space and $v_e$ the algebra unit. Let $H$ be a subgroup of $G$ of index 2. So $H$ is normal in $G$. Let $s\in G-H$. Then $s^2=\hat{h}\in H$ and $G= H\dot{\cup}Hs=H\dot{\cup}sH$. Now, if  there exists $h\in H$ such that $h^2=\hat{h}$ then $s$ is called degenerated. The set $\{v_h\ : h\in H\}\dot{\cup}\{v_h\cdot v_s\ :h\in H\}$ with the {\bf relative normalization of $v_s$ and $v_{\hat{h}}$} given by $v_s\cdot v_s\in\{v_{\hat{h}},-v_{\hat{h}}\}$ when $s$ is degenerated and given by $v_s\cdot v_s=v_{\hat{h}}$ when $s$ is not degenerated  is called a {\bf right--standard basis for ${\cal A}$ with respect to $\hat{\cal A}$} if $\{v_h\ : h\in H\}$
 is itself a right--standard basis for $\hat{\cal A}$ with respect to a subalgebra or $H=\{e\}$. In this case ${\cal A}=\hat{\cal A}\dot{+}(\hat{\cal A}\cdot v_s)$ as a vector space and each $ z\in {\cal A}$ can be uniquely written as $ z=z'+z''\cdot v_s=:(z',z'')_R$ where $z',z''\in \hat{\cal A}$. A right--standard basis is {\bf ordered} when the corresponding basis elements are listed as an $|G|$-tuple $[ v_e, v_{h_1},\cdots, v_{h_{|H|-1}}, v_s,v_{h_1}\cdot v_s, \cdots, v_{h_{|H|-1}}\cdot v_s]$ for an adopted listing $e, h_1,\cdots, h_{|H|-1}$ of the elements of $H$.  An {\bf (ordered)  left--standard basis for ${\cal A}$ with respect to $\hat{\cal A}$} is defined  similarly. In that case ${\cal A}=\hat{\cal A}\dot{+}(v_s\cdot \hat{\cal A})$ as a vector space and each $ z\in {\cal A}$ can be uniquely written as $ z=z'+ v_s\cdot z''=:(z',z'')_L$ where $z',z''\in \hat{\cal A}$.
 Given a right-- (resp. left--) standard basis for ${\cal A}$ with respect to $\hat{\cal A}$, the product in ${\cal A}$ in terms of the pairs  $(z',z'')_R$  (resp. $(z',z'')_L$) is called a right-- (resp. left--) Cayley-Dickson decomposition for ${\cal A}$ with respect to $\hat{\cal A}$. We denote ${\cal A}^L$ (resp. ${\cal A}^R$) the algebra $({\cal A};\ G,\rr, \{1,-1\},$ $C)$ with the adoption of a fixed left-- (resp. right--) standard basis with respect to a fixed $\hat{\cal A}$.
 \end{definition}

{\bf Examples:} A twisted group algebra with grading group  $Z_2\equiv \z/2\z=\{e,a\}$ has an ordered right-- and left--standard basis [generated by $v_a$] with respect to $\rr v_e$ given by  $[v_e,v_a]$ satisfying the relative normalization  $v_a\cdot v_a \in\{v_e,-v_e\}$. If initially $v'_a\cdot v'_a=k\; v_e$  for a $0\neq |k|\neq 1$ we take $v_a=\pm |k|^{-1/2}\; v'_a$,  and the sign freedom can be used in a further normalization of  $v_a$. A twisted group algebra with grading group $\z_4=\{e,b,b^2\equiv a, b^3\}$ has an ordered left-- (resp. right--) standard basis [generated by $v_b$] with respect to $\rr v_e\oplus \rr v_a$  given by  $[v_e, v_a, v_b, v_b\cdot v_a]$ (resp. $[v_e, v_a, v_b,v_a\cdot v_b]$) satisfying the relative normalizations  $v_a\cdot v_a \in\{v_e,-v_e\}$ (inherited from the left--standard basis of $\rr v_e\oplus \rr v_a$) and $v_b\cdot v_b=v_a$.   If we had  $v'_b\cdot v'_b=r\; v'_a$  with $0\neq r\neq 1$ we take  $v_a=(r/|r|)v'_a$ (using the sign freedom just remarked) and $v_b=\pm |r|^{-1/2}\; v'_b$. A further sign freedom remains. We will adopt another ordering for the left-- (right--) standard basis, namely $[v_e,  v_b, v_a, v_b\cdot v_a]$ (resp. $[v_e,  v_b, v_a, v_a\cdot v_b]$) which is closer to the ordering of the group. Now, let $G$ be $\z_2\times\z_4$, or the dihedral group of  eight elements, or the quaternion group. A twisted group algebra with grading group $G$ will have an ordered left- (resp. right--) standard basis [generated by $v_b$ and $v_s$] with respect to a subalgebra with grading group $H=\{e,b,b^2\equiv a, b^3\}\simeq\z_4$ of the form
$[ v_e, v_{b},v_a,v_{b}\cdot v_a,v_s,v_s\cdot v_{b},v_s\cdot v_a,v_s\cdot (v_{b}\cdot v_a)]$ (resp. $[ v_e, v_{b},v_a, v_a\cdot v_b,v_s,v_b\cdot v_{s}, v_a\cdot v_{s}, ( v_a\cdot v_{b})\cdot v_{s}]$) with $s\in G-H$, $s^2=\hat{h}\in H$, and the normalizations  $v_a\cdot v_a \in\{v_e,-v_e\}$,   $v_b\cdot v_b=v_a$ (both  inherited from the left-- (resp. right--) standard basis of the sub\-algebra with grading group $H$), and $v_s\cdot v_s\in \{v_{\hat{h}},-v_{\hat{h}}\}$ since $s$ turns out to be degenerated.

 We remark now, that if we have a twisted group algebra $({\cal A};\ G,\rr, \{1,-1\},$ $C)$, then we can always adopt a left-- (or right--) standard basis whose corresponding structure constant of $G$ is in $\{1,-1\}$ (since not every basis change maintains this feature). The reason for that is simple. Assume we have a basis $\{v'_g| g\in g\}$ for ${\cal A}$ with corresponding unital structure constant $C'$ of $G$ in $\{1,-1\}$.  We adopt $v_e:=v'_{e}=1$ and using the relative normalizations we can construct a  left-- (or right--) standard basis as explained above, whose corresponding structure constant turns out to be also in $\{1,-1\}$. The  terms with factors of the form $v'_{g_i}$, with the $g_i$'s generators of G  used to define the basis element $v_g$ (no matter which configuration of parenthesis) will be reduced to a product of structure constant factors $C'(g_i,g_j)$ or $\pm 1$'s  times a resulting $v'_{g}$. Hence, either  $v_{g}= v'_{g}$ or $v_{g}= -v'_{g}$ occur. The products $ v_g\cdot v_f=C(g,f)v_{gf}$ have a $C(g,f)$ that can differ from $C'(g,h)$ only by a sign. {\bf  Adopting a left-- (or right--) standard basis will constitute no constraint of generality}.

 Let $({\cal A};\ G,\rr, \{1,-1\},$ $C_{\cal A})$ be a twisted group algebra over $\rr$ with $G$ abelian. If $C$ is the structure constant for $G$ in $\{1,-1\}$  for a given ordered normalized left-- (right--) standard basis choice, then the opposite  algebra ${\cal A}^{opp}$ will have a structure constant $C_{{\cal A}^{opp}}$  for $G$ in $\{1,-1\}$ which is the transposed of the array $C_{\cal A}$ when using the corresponding  right-- (left--) standard basis that reverses the order of the factors in the constructed basis for ${\cal A}$.  Observe that the result holds when $G$ is nonabelian, but besides reversing the order of the factors in the basis we replace every appearance $v_g$ by $v_{g^{-1}}$, and besides transposing the array $C$  we replace $g$ by $g^{-1}$ (using the group anti--isomorphism $g\mapsto g^{-1}$) in its labels to obtain $C_{{\cal A}^{opp}}$.  Not every twisted group algebra (even if the grading group is abelian) is  isomorphic to its opposite algebra. That is,  ${\cal A}^{opp}\not\simeq {\cal A}$ can happen.

 Let $\{v_{s_i}|i=1,\cdots,t\}$ be the minimal set of generators of ${\cal A}=\rr_C G$ used to build  the left-- (or right--) standard basis  $\{v_g|g\in G\}$ with $v_{a}\cdot v_b=C(a,b)v_{ab}$.  In each  algebra automorphism mapping $v_{s_i}\!\mapsto v'_{s_i},\ i=1,\cdots,t$,  the left-- (or right--) standard basis $\{v'_g|g\in G\}$ generated by the $v'_{s_i}\!$'s satisfy $v'_{a}\cdot v'_b=C(a,b)v'_{ab}$ with the same structure constant. So, {\bf the group of automorphisms is characterized by the linear bijective mappings from old to new generators (of the whole algebra) preserving the structure constant}.

 Let ${\cal A}$ and ${\cal B}$ be two twisted group algebras both with grading group $G$, both spanned by the same type of left--standard bases $\{v_g|g\in G\}$ resp.  $\{w_g|g\in G\}$ with respect to the same fixed proper subalgebra,   both  subject to the same  constraints arising from proper subalgebras  but  having different structure constants $C_{\cal A}\neq C_{\cal B}$. Now, if instead of the  original  minimal set of generators $\{v_{s_i}| i=1,\cdots,t\}$ for ${\cal A}$, we select  a generic  minimal set of  generating elements  $\{v'_{s_i}| i=1,\cdots,t\}\subset{\cal A}$  subject only to the same constraints arising from proper subalgebras. With them we produce another left--standard basis $\{v'_g|g\in G\}$ with respect to the  fixed proper subalgebra which generate the whole algebra satisfying $v'_{a}\cdot v'_b=C'_{\cal A}(a,b)v'_{ab}$. If the resulting structure constant $C'_{\cal A}$ coincides always with $C_{\cal A}$ for such an arbitrary choice of generators, then the algebra ${\cal A}$ has a unique structure constant for such a type of left--standard bases and therefore ${\cal A}$ is not isomorphic to ${\cal B}$. If such uniqueness could not be established, given $\Phi:G\rightarrow G$ a  nontrivial group automorphism, the adoption of generators $v''_{s_i}:=v_{\Phi(s_i)},\ i=1,\cdots,t$, for ${\cal A}$ generate a left--standard basis $\{v''_g|g\in G\}$ that might have a different structure constant. Obviously, the considerations of this paragraph can be made for right--standard bases. It is frequently demanding to prove or disprove uniqueness of the structure constant  under such generic choice of generators. In that case the problem of establishing isomorphisms between two algebras brings us back to  finding bijective maps $\phi:{\cal A}\rightarrow {\cal B}$, such that $\phi(x\cdot_{\cal A}\, y)=\phi(x)\cdot_{\cal B}\, \phi(y), \forall x\in {\cal A}$. In that endeavor, the left-- (or right--) Cayley--Dickson decomposition of both algebras with respect to a fixed subalgebra is  instrumental, particularly when such subalgebra is unique in both algebras.

\begin{definition} Let $|G|\!>\!1$, and  $({\cal A};G,\rr, \{1,-1\},C_{\cal A})$ be a twisted group algebra with $C_{\cal A}$ the structure constant under an ordered right-- (or left--) standard basis for ${\cal A}$ with respect to $\hat{{\cal A}}$ given by $[ v_e, v_{g_1},\cdots, v_{g_{|G|-1}}]$. With this ordered basis we label the components $x_0,\cdots,x_{|G|-1}$ of  each $x\in{\cal A}$.   We consider  unary operations $x \mapsto \tilde{x}$ in  ${\cal A}$, where $\tilde{x}_0=\!x_{0}$, and $\tilde{x}_i=\!x_{i}$ or $\tilde{x}_i=\!-x_{i}\ \forall i\in\{1,\cdots,|G|-1\}$. They result from a combination of reflections and are called {\bf diagonal (involutive) operations} since clearly $\tilde{\tilde{x}}=\!x,\ \forall x\in {\cal A}$. We will single out diagonal operations of two types: we call {\bf  diagonal conjugations} those  $x\mapsto \bar{x}$ such that either (i) $ x\cdot \bar{x}\in{\cal B}\ \forall x\in{\cal A}$, where ${\cal B}$ is a fixed subalgebra of dimension $|G|/2$ (such as $\hat{{\cal A}}$) so that $\det R(x)=\!\det R_{{\cal B}}(x\cdot \bar{x})$ (or $\det L(x)=\!\det L_{{\cal B}}(x\cdot \bar{x})$) with $R_{{\cal B}}$  (resp. $L_{{\cal B}}$) the operators product from the left (resp. right)  in  ${\cal B}$, and ${\cal B}=\!\rr 1$ or ${\cal B}$ has itself a diagonal conjugation; or (ii) $\det R(x)$ (or $\det L(x)$) can be expressed as a monomial built exclusively in terms of $x$, the unary operation, and the product. Now, a diagonal operation $x\mapsto \tilde{x}$ is called a {\bf diagonal involution} if it is an anti--automorphisms (bijective map with $\widetilde{x\cdot y}=\tilde{y}\cdot\tilde{x},\ \forall x,y\in {\cal A}$).
\end{definition}

\section{Grading Group of order 2}
 We use the group $\z_2:=\z/2\z=(\{0,1\};+)$ with additive notation. We consider the unital
structure constant array, with $C(1,1)=\alpha\in\{1,-1\}$, with organized  left-- and  right-- standard basis $[v_0,v_1]$. We use proposition 2(i),  and from $L(v_1)^2=-I$ we require $\alpha=-1$ in order to have absence of zero divisors. We obtain in this way the complex numbers by
identifying $v_0\equiv 1$ and $v_1\equiv \ip$, since
$\ip^2=v_1\cdot v_1=v_1^2=C(1,1)v_0=-1$. This leads to the multiplication table:
\begin{center}
\begin{tabular}{l||rr}
${\cc} $&$v_0$&$v_1$\\
\hline \hline
$v_0$ &$v_0$&$v_1$\\
$v_1$ &$v_1$&$-v_0$
\end{tabular}\\
{\bf Table I}: Multiplication table of $({\cc};\z_2,\rr, \{1,-1\},$ $C_{\cc})$\end{center}
 Observe that the structure constant $C_{\cal A}$ for a left-- and right--standard basis of a twisted group division algebra $({\cal A}; \z_2,\rr, \{1,-1\},$ $C_{\cal A})$  is unique. ${\cal A}\equiv \cc$ is thus isomorphic to its opposite algebra and it has an involution. But being $G$ abelian and $C_{\cal A}$ symmetric then ${\cal A}$ is commutative. It turns out to be also associative.
Each
$x\in\cc$ is denoted in terms of its components corresponding to its Cayley-Dickson pair. We define also a diagonal conjugation.
\begin{equation}
x\equiv x_0+x_1\,v_1=:[x_0,x_1]_{\cc},\ \bar{x}:=x_0-x_1\,v_1=[x_0,-x_1]_{\cc},\ \forall x\in\cc.\label{CompConj}\end{equation}
We obtain a composition norm $x\mapsto |x|_{\cc}=(x\bar{x})^{\frac{1}{2}}$, where $x\bar{x}=\det L(x)=\det R(x)$. The conjugation is  also a diagonal involution.

\section{Grading Group of order 4}

There are two possible grading groups of order four: $\z_2\times\z_2$,   and
$\z_4$. The former will lead to the quaternions and the latter to a
novel  NNA  division algebra.

\subsection{Grading group: the Klein Group}

We use additive group
notation for the grading group $\z_2\times \z_2$. We adopt for the  twisted group division algebra
$({\cal A};\z_2\times\z_2,\rr,\{1,-1\}$, $C)$  a
right--standard basis $[v_{(0,0)}\equiv 1,\ v_{(1,0)},\ v_{(0,1)},$
$v_{(1,1)}\equiv v_{(1,0)}\cdot v_{(0,1)}]$ to obtain the usual notation for the resulting algebra. From
equation (\ref{(0)}) we have $C((1,0),(0,1))=1$. Such a twisted group algebra
has  three different   $\z_2$-graded twisted group subalgebras,
whose structure constants have to be $C_\cc$ in order to avoid zero divisors. This forces $C((1,0),(1,0))=C((0,1),(0,1))=C((1,1),(1,1))=-1$. Using such a
basis for a NNA division $\rr$--algebra ${\cal A}$, the  unital structure constant array with $\alpha,\ \beta,\
\delta,\ \epsilon,\ \phi\ \in \{1,-1\}$  has to have the from:
\begin{center}\begin{tabular}{l||rrrr}
$C $&(0,0)\!\!&(1,0)\!\!&(0,1)\!\!&(1,1)\!\!\\
\hline \hline
(0,0)&1&1&1&1\\
(1,0)&1&-1&1&$\alpha$\\
(0,1)&1&$\beta$&-1&$\delta$\\
(1,1)&1&$\epsilon$&$\phi$&-1\\
\end{tabular}\\
{\bf Table II}: Structure constant of $G=\z_2\times\z_2$ in
$\{1,-1\}$\end{center}

Using proposition 2(i), the constraints $R(v_g)^2=-I,\forall g\neq e$ lead to $\phi = -1 ,\ \beta\epsilon = -1,\ \delta\alpha = -1$. While $L(v_g)^2=-I,\forall g\neq e$ lead to $\alpha = -1,\ \phi\epsilon = -1,\ \delta\beta = -1$. Hence, necessarily $\alpha = -1,\  \beta = -1,\ \delta = 1,\  \epsilon = 1$, and $\phi = -1$. We
identify $v_{(0,0)}\equiv 1$, $v_{(1,0)}\equiv \ip$, $v_{(0,1)}\equiv {\,\rm j}$, $v_{(1,1)}\equiv {\,\rm k}$ to obtain the quaternion numbers  $\hh^R$. We emphasize that we use a right--standard basis in the usual quaternion notation.
This leads to the multiplication table:
\begin{center}\begin{tabular}{l||rrrr}
$\hh^R $&$v_{(0,0)}$&$v_{(1,0)}$&$v_{(0,1)}$&$v_{(1,1)}$\\
\hline \hline
$v_{(0,0)}$&$v_{(0,0)}$&$v_{(1,0)}$&$v_{(0,1)}$&$v_{(1,1)}$\\
$v_{(1,0)}$&$v_{(1,0)}$&$-v_{(0,0)}$&$v_{(1,1)}$&$-v_{(0,1)}$\\
$v_{(0,1)}$&$v_{(0,1)}$&$-v_{(1,1)}$&$-v_{(0,0)}$&$v_{(1,0)}$\\
$v_{(1,1)}$&$v_{(1,1)}$&$v_{(0,1)}$&$-v_{(1,0)}$&$-v_{(0,0)}$\\
\end{tabular}\\
{\bf Table III}: Multiplication table of $({\hh^R};\z_2\times\z_2,\rr, \{1,-1\},$ $C_{\hh})$\end{center}
The structure constant for a  right--standard basis of a twisted group division algebra $({\cal A}; \z_2\times\z_2,\rr, \{1,-1\},$ $C)$ is unique. Therefore, it is isomorphic to its opposite algebra, that is $(\hh^R)^{opp}=\hh^{L}$. For a   left--standard basis the unique structure constant is given by  $C_{\hh}$ transposed, and thus $\hh^R$ has an involution. But being $\z_2\times\z_2$ abelian and $C_{\hh}$ non--symmetric then ${\cal A}$ is not commutative, but the inverses  turn out to be two--sided. $\hh^R$ turns out to be associative. We will denote each element of the algebra
$\hh^R$ in terms of its components, i.e. as quadruplets of real
numbers. We define also a diagonal conjugation in $\hh^R$:
\begin{eqnarray}
&&x\equiv x_0+x_1\,\ip+x_2\,{\,\rm j\,}+x_3\,{\,\rm k\,}=:[x_0,x_1,x_2,x_3]_{\hh},\\ &&\bar{x}:=x_0-x_1\,\ip-x_2\,{\,\rm j\,}-x_3\,{\,\rm k\,}=[x_0,-x_1,-x_2,-x_3]_{\hh},\ \forall x\in\hh^R.\label{QuatConj}\end{eqnarray}
The conjugation  is also an involution and leads to the definition of a composition norm $x\mapsto |x|_{\hh}=(x\bar{x})^{\frac{1}{2}}$, satisfying $|x|_{\hh}^4=\!\det L(x)=\!\det R(x)\ \forall x\in\hh^R$.

\subsection{Grading group: cyclic of order 4}
 A twisted group division algebra  $({\cal A};\z_4,\rr,\{1,-1\},C)$ as a
vector space has a left--standard basis $[v_0\equiv 1,\ v_1\equiv \w,\ v_2\equiv \w^2,\ v_3\equiv \w\cdot(\w^2)=: \w^3]$. We adopt in this subsection the definition
of cubic powers with this particular configuration of parenthesis:
stacking new factors from the left. As an
algebra, ${\cal A}$ can be generated by  a $v_1\equiv \w$ alone.
 $\w^2$ generates  a  $\z_2$-graded subalgebra, so
we require $\w^2\cdot \w^2=-1$ to avoid zero divisors. For $\alpha,\ \beta,\ \delta,\
\epsilon,\ \phi,\ \omega\ \in \{1,-1\}$, the unital structure constant $C$ for the adopted left--standard basis  has the form:
\begin{center}\begin{tabular}{l||rrrr}
$C $&0&1&2&3\\
\hline \hline
0&1&1&1&1\\
1&1&1&1&$\alpha$\\
2&1&$\beta$&-1&$\delta$\\
3&1&$\epsilon$&$\phi$&$\omega$\\
\end{tabular}\\
{\bf Table IV}: Structure constant of $G=\z_4$ in
$\{1,-1\}$\end{center}

Using proposition 2(i), the constraints $R(v_g)^{|g|}=-I,\forall g\neq e$ lead to $\phi = -1,\ \alpha\delta\omega = -1,\ \beta\epsilon = -1$. While $L(v_g)^{|g|}=-I,\forall g\neq e$ lead to $\delta\beta = -1,\ \epsilon\phi\omega = -1,\ \alpha = -1$. From all these constraints there results two solution sets. The first $\alpha = -1, \beta = 1, \delta = -1, \epsilon = -1, \phi = -1, \omega = -1$ produces an associative algebra since $L(x)R(y)=R(y)L(x)\  \forall x,y$. But the only 4--dimensional associative  division $\rr$--algebra is $\hh$ which can not be generated as an algebra by a single element. The resulting algebra is not a division algebra. In fact, in this algebra  $L(\sqrt{2}+\w-\w^3) R(\sqrt{2}-\w+\w^3)=0$. The other solution $\alpha = -1,\ \beta = -1,\ \delta = 1,\ \epsilon = 1,\ \phi = -1,\  \omega = 1$ leads to a NNA division $\rr$--algebra due to the positive definiteness of
\begin{equation}
 \det R(x)=
(x_0^2+x_2^2)^2+(x_1^2+x_3^2)^2=\det L(x)\label{detMLTess}.\end{equation}
The resulting NNA division $\rr$--algebra is called the {\bf Tesseranion algebra}, denoted by $\te$. In its left--standard basis becomes $\tes$ with multiplication table:
\begin{center}\begin{tabular}{l||rrrr}
$\tes $&$v_{0}$&$v_{1}$&$v_{2}$&$v_{3}$\\
\hline \hline
$v_{0}$&$v_{0}$&$v_{1}$&$v_{2}$&$v_{3}$\\
$v_{1}$&$v_{1}$&$v_{2}$&$v_{3}$&$-v_{0}$\\
$v_{2}$&$v_{2}$&$-v_{3}$&$-v_{0}$&$v_{1}$\\
$v_{3}$&$v_{3}$&$v_{0}$&$-v_{1}$&$v_{2}$\\
\end{tabular}\\
{\bf Table V}: Multiplication table of $(\tes;\z_4,\rr, \{1,-1\},$ $C_{\tes})$\end{center}

The structure constant $C_{\tes}$ for a  left--standard basis of a twisted group division algebra $({\cal A}; \z_4,\rr, \{1,-1\},$ $C)$ is unique. Therefore, it is isomorphic to its opposite algebra, that is $\tes\simeq(\tes)^{opp}=\tesR$. For a   right--standard basis the unique structure constant is given by  $C_{\tes}$ transposed, and thus $\tes$ has an involution.  We will denote each element of $\tes$ in terms of its components,
\begin{equation}
x\equiv x_0+x_1\,\w+x_2\,\w^2+x_3\,\w\cdot \w^2=:[x_0,x_1,x_2,x_3]_{\tes},\ \forall x\in\tes.\end{equation}

Since $\w^2\cdot\w^2=-1$, $\w^2$ generates a subalgebra
isomorphic to $\cc$. We identify for this consideration
$\ip\equiv \w^2$.  Each tesseranion element $x\in\tes$ can be rewritten as a pair of complex numbers using complementary projectors $P_e$ and $P_o$:
\begin{eqnarray}
&& P_{e}(x):=\left( x-\w^2\cdot(x\cdot\w^2)\right)/2 ,\ P_{o}(x):= (I-P_e)(x),\ \ \label{ProjTes}\\
\!\!\!&&x\!=\!
 (x_0+x_2\ip)+\w\cdot(x_1+x_3\ip)\!=\!x_{even}+x_{odd}
\!\equiv\! (X_{Even},\, X_{Odd})_{\tes},\ \ \label{TesDecomp}\\
&&{\rm where}\ x_{even}:= x_0+x_2\,\w^2=P_{e}(x),\ x_{odd}:= x_1\,\w+x_3\,\w^3=P_{o}(x),\nn\\
&& X_{Even}:=[x_0,\ x_2]_{\cc}=P_{e}(x),\ X_{Odd}:= [x_1,\
x_3]_{\cc}=\w^3\cdot P_{o}(x).\label{PairTes}
\end{eqnarray}

\begin{definition} We call resp. $\rr\,1$ {\bf the real subalgebra of $\te$}, $\cc^\te:=\rr\oplus\rr\,\w^2=P_e(\te)$ the {\bf complex subalgebra} or {\bf set of even elements of  $\te$}, and  $\te^{odd}:=\rr\,\w\oplus\rr\,\w^3=\w\cdot\cc^\te=\cc^{\te}\cdot\w=P_o(\te)$  the {\bf set of odd elements of  $\te$}.
\end{definition}
These sets reveal a $\z_2$--grading underlying $\te$: $\cc^\te\cdot \te^{odd}\!=\!\te^{odd}\cdot\cc^{\te}\!=\!\te^{odd},\ \te^{odd}\cdot\te^{odd}\!=\!\cc^{\te}$.
Being $\z_4$ abelian and $C_{\tes}$ non--symmetric then $\tes$ is not commutative.
Now, if $a\in\{1,2,3\}$ then $x\cdot \w^a=\w^a\cdot x$ implies $x\in \rr\oplus\rr\w^a$. So, the {\bf center of the tesseranion is  its real subalgebra  $1\rr$ in $\te$}.

Since $\w\cdot\w^2=-\w^2\cdot\w$, {\bf $\tes$ is not associative, not power associative, not flexible, nor alternative. Hence $\te\not\simeq\hh$}. We verify also that $\te -\{0\}$ is {\bf neither a left-- or right--Bol loop},
 {\bf nor a Moufang loop}.
Recalling the associator $(x,y,z):=(x\cdot y)\cdot z-x\cdot(y\cdot z)$, we find
\begin{eqnarray}
 (x,y,z)\in\te^{odd}\ {\rm and}&
\left((x,y,z)=0\Leftrightarrow x\ {\rm or}\ y\ {\rm or}\ z\in \cc^\te \right),\ \forall x,y,z\in\te;&\ \ \label{AssocTes}\\
& (x'\cdot y')\cdot z'+x'\cdot (y'\cdot z')=0,\  \forall x',y',z'\in\te^{odd}.&\ \ \label{AssocTes1}
 \end{eqnarray}
So,  $\cc^\te$  is the {\bf nucleus of} $\te$, $\te$ is {\bf $\cc$-associative}, and $(\te,+)$  is  a  $\cc^\te$--{\bf bimodule}. Odd elements {\bf anti--associate} among them, so if $x\in\tes-\cc^\te$ then $\w'\!=\!(x,x,x)$ $\neq 0$, $\w'\cdot\w'^2\!=\!-\w'^2\cdot\w'$, and $\w'$ generates  $\tes$. So, {\bf each $x\in\tes-\cc^\te$ generates $\te$}. Hence, {\bf $\cc^\te$ and $\rr\,1$ are the only proper division $\rr$-subalgebras of $\te$}.

The {\bf derivations in $\te$} turn out to be of the form $D(x)\!=\!k[\w^2,x]\!=\!k(\w^2\!\cdot\! x-x\!\cdot\! \w^2),\ k\in\rr\!-\!\{0\}$ and generate a Lie algebra $u(1)$. This conforms to the results in \cite{GMBandJMO} where the derivation algebras of
NNA division $\rr$--algebras have been classified.
 Since $\tes$ is generated by a single element, an automorphism mapping $\phi:\w\mapsto \w'=[a_0,a_1,a_2,a_3]_{\tes}$
satisfies the necessary  condition for the $\z_2$--graded subalgebra
$\phi(\w^2\cdot\w^2)=\w'^2\cdot \w'^2=-1$ which leads to $a_0=a_2=0$ and
$a_1^2+a_3^2=1$. Such $\w'$ generate a  left--standard
basis $[1,\w',\w'^2,\w'\cdot \w'^2]$ with a structure constant coinciding with $C_\tes$. So, no further constraints are required, and this allows to characterize the set of automorphisms.
The same result can be obtained from the $\tesR\simeq\tes$ algebra with its Cayley-Dickson decomposition:
\begin{eqnarray}
x\cdot y&= &(X_{Even}, X_{Odd})_{\tesR}\cdot (Y_{Even}, Y_{Odd})_{\tesR}\nn\\
&=&(X_{Even}Y_{Even}+\ip\,X_{Odd}\overline{Y_{Odd}},X_{Even}Y_{Odd}+
X_{Odd}\overline{Y_{Even}})_{\tesR}.\label{CDTesR}
\end{eqnarray}
Since each automorphism  maps the unique subalgebra isomorphic to $\cc$ into itself, it has the form $\phi:(X_{Even}, X_{Odd})_{\tesR}\mapsto(\phi_1(X_{Even}), \phi_2(X_{Odd}))_{\tesR}$, with $\phi_1,\phi_2:\cc\rightarrow\cc$ bijective and linear, and $\phi_1$ an automorphism of $\cc^{\te}$. Calling $\phi_2(1)=b$ we obtain from $\phi(x\cdot y)\!=\phi(x)\cdot \phi(y)$  that $b\bar{b}=-\ip\phi_1(\ip)$. Hence necessarily $\phi_1(\ip)\!=\ip$,  and $\phi:(X_{Even}, X_{Odd})_{\tesR}\!\mapsto(X_{Even}, b\,X_{Odd})_{\tesR}$ with $b\in\cc$, $b\bar{b}=1$, leading  again to the family of automorphisms $\phi:\w\!\mapsto [0,b_0,0,b_1]_{\tesR}$ with $b_0^2+b_1^2=1$.
Also, transformations of the form $\phi:x\!\mapsto z\cdot(x\cdot z^{-1})$ for $z\in \cc^\te$  with $|z|_{\tes}=\!1$ (taking $z_0^2-z_2^2\!=b_0$, and $2z_0z_2\!=b_1$) lead to the same family of automorphisms. The group of automorphisms of $\tes\simeq\tesR$ is thus $U(1)$ or $S^1$. The decomposition in (\ref{CDTesR}) reveals that $\tes$ corresponds to a modification of the product of $\hh$, whose automorphisms fix  $\ip$. Unlike $\hh$, not every automorphism of a proper $\rr$--subalgebra of $\tes$ (like $\cc^\te$) extends to an automorphism of $\tes$.

We consider the diagonal involutive operation $x\mapsto \bar{x}$ (see definition 6)
 \begin{equation}
 x\mapsto \bar{x}:=[x_0,\,-x_1,\,-x_2,\,-x_3]_{\tes}=(\overline{X_{Even}},\ -X_{Odd})_{\tes}.\label{ConjTes}\end{equation}
It is not an involution (nor an isomorphism), but it is a conjugation since
   $x\cdot \bar{x}=\!\bar{x}\cdot
x=\![x_0^2+x_2^2,0,-x_1^2-x_3^2,0]_{\tes}\in\! \cc^\te$ and $\det R(x)=\!\det L(x)=\!(x\cdot \bar{x})\cdot\overline{(x\cdot \bar{x})}\ \forall x\in\tes$. The map $x\mapsto \bar{x}$ is called {\bf the  (diagonal) conjugation of $\tes$}. We define the
 scalar function $x\mapsto |x|_{\tes}:=\!(\det R(x))^{1/4}$, which satisfies
\begin{eqnarray}
\!\!\!\!~&\!\!\!~&|x|_{\tes}^4=x\cdot\left(\bar{x}\cdot \overline{(x\cdot \bar{x})}\right)=\left(\overline{(x\cdot \bar{x})}\cdot \bar{x}\right)\cdot x
\label{QuartIdTes}\\
&&=x\cdot\overline{((x\cdot\bar{x})\cdot x)}=\overline{(x\cdot(x\cdot\bar{x}))}\cdot x= \overline{(\bar{x}\cdot x^2)}\cdot x= x\cdot \overline{( x^2\cdot\bar{x})}\nn\\
&&=(x\cdot \bar{x})\cdot\overline{(x\cdot \bar{x})}= (x_0^2+x_2^2)^2+(x_1^2+x_3^2)^2=|X_{Even}|_{\cc}^4+|X_{Odd}|_{\cc}^4.\ \ \label{QuartIdTesA}\end{eqnarray}
The determinants in (\ref{detMLTess}) coincide with $|x|_{\tes}^4$. For  $x\neq 0$ and using the identities in (\ref{QuartIdTes}),  we find left--
and right--inverses of $x$ in $\tes$  (which differ in general).
  Since $\tes\!\simeq\!(\tes)^{opp}$, it has an involution. Several maps $x\mapsto\tilde{x}$ where $\tilde{x}_i\!=\!\pm x_{\sigma(i)}$ with $\sigma$ a permutation are  involutions but none is simultaneously a conjugation. Any linear transformation $x\!\mapsto\!\tilde{x}$ satisfying $x\cdot \tilde{x}\in\cc^{\te},\ \forall x\in\tes$ has the form $\tilde{x}\!=\![ax_0+bx_2,-ax_1+bx_3,-ax_2+bx_0,-ax_3-bx_1]_{\tes}$. By requiring $\widetilde{x\cdot y}=\tilde{y}\cdot\tilde{x},\ \forall x,y\in\tes$ we obtain $a\!=\!b\!=\!0$. Also, no linear operation $x\!\mapsto\! \bar{x}$ is an involution where $\det R(x)$ coincides with one of the quartic monomials  between equal signs in (\ref{QuartIdTes}--\ref{QuartIdTesA}). We call  {\bf  the (diagonal) involution  $x\mapsto \tilde{x}$ of $\tes$}:
 \begin{equation}
 x\mapsto \tilde{x}:=[x_0,\,x_1,\,x_2,\,-x_3]_{\tes}=
(X_{Even},\ \overline{X_{Odd}})_{\tes}, \label{AntiInvTes} \end{equation}
 where we used the conjugation in $\cc$. As every involution,
 $\widetilde{x\cdot y}=\!\tilde{y}\cdot\tilde{x}$ and $\tilde{\tilde{x}}=\!x$ $\forall x,y\in\!\tes$. It corresponds to the anti-isomorphism $\stackrel{\sim}{\cdot}\,:\tes\rightarrow(\tes)^{opp}=\!\tesR$. The involution and  the conjugation in $\tes$ are related by:
\begin{equation}
\bar{x}=\w^3\cdot(\tilde{x}\cdot\w),\ \ \
\tilde{x}\ =\ \w^3\cdot(\bar{x}\cdot\w),\ \forall x\in \tes.\label{AntiInvConjTes} \end{equation}

Although, for all $x\in\tes$, we have $x+\bar{x}\in\rr\,1$, we do not
have $x\cdot\bar{x}\in\rr$ in general , but  $x\cdot\bar{x}\in \cc^\te$,
and $|x|_{\tes}>0$ for $x\neq 0$. This suggests the
definition of a {\bf higher order norm}: There is  a conjugation
$x\mapsto\bar{x}$ such that $x\cdot\bar{x}$ belongs to a fixed proper
division subalgebra which has itself a higher
order norm. We proceed in this manner until we arrive
through an even root of a monomial involving $x$ and conjugation(s) to
define a value $|x|\in\rr$ which is an even root of the positive
definite  $\det L(x)$ (or $\det R(x)$). In  $\tes$ the steps are
$\tes\to \cc^\te\to \rr\,1$. The number of steps would give the order of
the norm. So, $|x|_{\tes}$ would be  of 2nd order, while the
norms for the complex and quaternion algebras are first order norms.
The conjugations in the
diverse stages of a higher order norm might not coincide,
although in the case of $\tes$ they do.

In the Appendix A  we prove that $|x|_{\tes}$ satisfies the triangle inequality and is a norm, but it does not satisfy the {\bf composition equality} $(|x\cdot y|=|x||y|\ \forall x,y)$.

We inquire about a generalization of
positive homogeneity $|\alpha\,x|_{\tes}=|\alpha||x|_{\tes}$, $\forall \alpha\in \rr,
x\in \tes$. Although the composition equality  is not satisfied
in general, when any of the
two factors is  even or odd then composition equality holds. This leads to the {\bf pure factor composition equalities}:
\begin{equation}
x\ {\rm or}\ y\in\cc^\te \cup \te^{odd}\Longrightarrow  |x\cdot y|_{\tes}=|x|_{\tes}\,|y|_{\tes}, \ {\rm for \ all}\ \  x,y\in\tes.
\label{SETes}\end{equation}
From this property and from $|\w|_{\tes}=1$ we conclude that all elements in the left--standard basis  of $\tes$ have also unit norm.

We reunite our results thus far in this subsection with the following theorem:

\begin{theorem}
Let ${\cal A}$ be a non necessarily associative division $\rr$--algebra and $G$ a group  with identity $e$, and $2\leq |G|\leq 4$. Let ${\cal A}$ admit a vector space basis $\{v_a | a\in G\}$ such that  $v_a\cdot v_b =C_{{\cal A}}(a,b)\, v_{ab}$ with structure constant $C_{\cal A}$ satisfying  $C_{\cal A}(a,b)\in\{1,-1\}$ and $C_{\cal A}(a,e)=C_{\cal A}(e,b)=1$ for  all $a,b\in G$. That is, ${\cal A}$ is a twisted group algebra $({\cal A}; G,\rr, \{1,-1\},C_{\cal A})$. Then:
\begin{description}
\item{(i)} ${\cal A}$ is $\rr$--algebra isomorphic to one of the mutually non isomorphic  algebras: $\cc, \hh$, or $\te$. Each  ${\cal A}$  as a twisted group algebra spanned under a right-- (resp. left--) standard basis is denoted ${\cal A}^R$ (resp. ${\cal A}^L$) and has a unique structure constant $C_{{\cal A}^R}$ ( resp. $C_{{\cal A}^L}$). Each ${\cal A}$ is isomorphic to its opposite algebra, so ${\cal A}^R\simeq({\cal A}^R)^{opp}={\cal A}^L$. Hence,  $C_{{\cal A}^L}$ is the transpose of $C_{{\cal A}^R}$. Each ${\cal A}$ has a diagonal involution and a diagonal conjugation.
\item{(ii)}  The complex $\cc$ and  the quaternion  algebra $\hh$ are graded by $\z_2$ and $\z_2\times\z_2$ respectively, and  under a right--standard basis $[1,v_1\equiv\ip]$ resp. $[1$, $v_{(1,0)}\equiv\ip,v_{(0,1)}\equiv {\rm j}$, $v_{(1,1)}\equiv \ip\cdot {\rm j}]$ have structure constants underlying multiplication Table I resp. Table III.
    The algebras $\cc$ and $\hh^R$ have diagonal conjugations that are also involutions given by (\ref{CompConj}) and (\ref{QuatConj}) respectively,  and they  lead to norms that make them composition algebras.
\item{(iii)} The tesseranion algebra $\te$  under a  left--standard basis  $[1,v_{1}\equiv\w ,v_{2}\equiv \w^2,v_{3}\equiv \w\cdot \w^2]$ becomes the twisted group algebra $(\tes; \z_4,\rr, \{1,-1\},C_{\tes})$ with its  unique structure constant $C_{\tes}$ underlying multiplication Table V.  The center of $\te$ is its unique real subalgebra $1\rr$. Using the complementary projectors  $P_e$ and $p_o$ in (\ref{ProjTes}), we call $\cc^\te:=P_e(\te)=\rr\oplus\rr\, \w^2\simeq \cc$ the set of even elements or complex subalgebra, and  $\te^{odd}:=P_o(\te)=\rr\,\w\oplus\rr\, \w\cdot \w^2$ the set of odd elements of $\te$. The associators in $\te$ satisfy (\ref{AssocTes}) and $\cc^\te$ is the nucleus, as well as its unique complex subalgebra.  ($\te;+)$ is a $\cc^\te$--bimodule. Odd elements anti--associate among them, see (\ref{AssocTes1}).  $\tes$ admits a diagonal involution in (\ref{AntiInvTes}), and  a  diagonal conjugation in (\ref{ConjTes}) satisfying $x\cdot\bar{x}=\bar{x}\cdot x\in\cc^\te$ for all $x\in\tes$. These operations are related by (\ref{AntiInvConjTes}). No nontrivial linear unary operation $x\mapsto \tilde{x}$ in $\te$ satisfies simultaneously $x\cdot \tilde{x}\in\cc^{\te}$ and $\widetilde{x\cdot y}=\tilde{y}\cdot\tilde{x},\ \forall x,y\in\tes$. The operator  $R(x)$ (resp. $L(x)$) associated with the right-- (resp. left--) multiplication by $x\in\tes$ satisfies $\det R(x)=\det L(x)\ \forall x\in \tes$. We define $x\mapsto\! |x|_{\tes}:=\!(\det R(x))^{1/4}\geq 0$ which is a norm (see Appendix A) satisfying (\ref{QuartIdTes}-\ref{QuartIdTesA}), where $|x|^4_{\tes}$ is expressed as diverse quartic  monomials built with $x$ and the conjugation. The norm does not satisfy the composition equality in general, but it fulfils pure factor composition equalities in  (\ref{SETes}). All left-- (and right--) standard basis elements of $\tes$  have unit norm. $\tes$ is neither  commutative, nor associative, nor flexible, nor power associative, nor alternative.  $\tes -\{0\}$ is neither  a left-- nor right--Bol loop, nor a Moufang loop. $\tes$ is generated  (via its associator) by any single element of $\tes$ not in its nucleus.  The Cayley--Dickson decomposition with respect to $\cc^\te$ of the $\tesR$--product is given in (\ref{CDTesR}). The derivations in $\te$ are of the form $D(x)=k(\w^2\cdot x-x\cdot\w^2),\ k\in\rr-\{0\}$, generating a $u(1)$ Lie algebra. The group of automorphisms of $\te$ constitute a Lie group $U(1)\simeq S^1$. It is the set of transformations $x\mapsto\! z\cdot(x\cdot z^{-1})$ for $z\in\! \cc^\te$ with $|z|_{\tes}=\!1$, or equivalently in $\tesR$, the functions $(X_{Even}, X_{Odd})_{\tesR}\mapsto( X_{Even}, b\,X_{Odd})_{\tesR}$ with $b\in \cc$ and $b\bar{b}=1$. Not every $\rr$--automorphism of $\cc^{\te}\subset\tes$ extends to an $\rr$--automorphism of $\tes$.
\end{description}
\end{theorem}

The  decomposition in (\ref{CDTesR}) characterizes an extension moving from a $\z_2$--grading to a $\z_{4}$--grading. Such a product belongs to a family of generalized Cayley-Dickson doublings considered in \cite{WC}, and \cite{PumplAst} leading to what have been called {\bf nonassociative quaternions}. This family was proven to be equivalent to some 4--dimensional NNA division $\rr$--algebras  already considered in \cite{AHK}, \cite{Albert}, and
 \cite{Dickson}.
 If the algebra demands at least two generators, the name {\it nonassociative quaternion} for it seems consonant. Some NNA  division $\rr$-algebras are generated by a single element (such as $\tes$). From this point of view the name {\it nonassociative quaternions} for them would seem less natural.

We define
$U_{\tes}:=\!\{y\in \tes :|y|_{\tes}=1\}$.  From (\ref{SETes}), if $x$ belongs to $U_{\tes}\cap \cc^{\tes}$ or to   $U_{\tes}\cap{\te}^{odd}$ then $x\cdot U_{\tes}=\!U_{\tes}\cdot x^{-1}=\!(x\cdot U_{\tes})\cdot x^{-1}\!=U_{\tes}$. The image of each element of $U_{\tes}$ under the products by elements of the sets $U_{\tes}\cap \cc^{\te}$ and  $U_{\tes}\cap{\te}^{odd}$  leads to maps involving manifolds homeomorphic to spheres, and to
considerations analogous to the Hopf fibrations  (see \cite{HopfFibr}, \cite{Baez}, \cite{LWTetal}).

We define the {\bf antisymmetric or commutator product} ``$[\cdot,\cdot]$" in $\tes$:
\begin{equation}
[\cdot,\cdot]:\tes\to\tes,\ (x,y)\mapsto[x,y]\equiv
(x\cdot y-y\cdot x)/2.\label{CommTes}\end{equation}
Now, $(\tes, +)$ with the  product ``$[\cdot,\cdot]$" constitutes the
algebra $\tes^-$.
The non-zero commutation relations involving  elements of the basis  $\{ v_0, v_1,v_2,v_3\}$ are:
\begin{equation}
~[v_1,v_2]=v_3,\ \ [v_2,v_3]=v_1,\ \ [v_3,v_1]=v_0.\label{CommTessAlg}\end{equation}
Clearly ${\rm gen}\{v_0,v_1,v_3\}$ is an ideal of $\tes^-$. It is
easy to verify that $\tes^-$ satisfies the Jacobi identity.
Hence, $\tes^-$ {\bf is a Lie
algebra}, and thus {\bf $\tes$ is Lie admissible}. Furthermore, {\bf the ideal  ${\rm
gen}\{v_0,v_1,v_3\}$ as a subalgebra of $\tes^-$  is isomorphic to
the Heisenberg Algebra. Hence, the algebra $\tes$ provides a
one-dimensional representation of the Heisenberg algebra}. The Lie
algebra $\tes^-$ is thus an extension of the Heisenberg algebra,
and it {\bf is solvable but not nilpotent} since its lower central
series stabilizes in the Heisenberg
subalgebra. $\tes^-$ is the 4-dimensional solvable $\rr$--algebra
 $M^{14}_a$ with $a=-1$, of the classification of solvable 4--dimensional algebras
 given by W.A. de Graaf, see \cite{deGraaf}. Hence, $M^{14}_a$ is compatible with a $\z_4$-grading. We remark that $(\tes;+)$ with the product
$x\bullet y\equiv(x\cdot y+y\cdot
x)/2$ is not a Jordan algebra. We summarize:

\begin{theorem}  $\tes$ is Lie--admissible. The resulting Lie algebra $(\tes^-;[\cdot,\cdot])$ is a solvable but not nilpotent extension ($M^{14}_a$, with $a=-1$ using de Graaf's classification, see \cite{deGraaf}) of the Heisenberg algebra. $\tes$ is not Jordan admissible.
\end{theorem}

We found some quartic identities in (\ref{QuartIdTes}--\ref{QuartIdTesA}). We devote attention to some of the identities characterizing $\tes$.  The following two hold in $\tes$, and we call them the {\bf  cubic tesseranity identities  in two variables involving conjugates}:
\begin{equation} \bar{x}\cdot(x\cdot y)=y\cdot(x\cdot \bar{x}),\ \
(y\cdot x)\cdot\bar{x}=(x\cdot \bar{x})\cdot y,\ \ {\rm for\ all}\  x,y\in\tes.\label{Tesseranity1}
\end{equation}
These identities allow us to solve linear equations: $\forall\  x,c,a \in\tes, \ a\neq 0$, then $a\cdot x=c \Longrightarrow x= (\bar{a}\cdot c)
\cdot\overline{(a\cdot\bar{a})}/|a|_{\tes}^4.$ Similarly we solve $y\cdot b=d$ for $b\neq 0$.
\begin{theorem}
The following  inequalities hold for the norm in $\tes\simeq\tesR$. The ones in (\ref{SITes}) are called {\bf composition inequalities}:
\begin{eqnarray}
|x|_{\tes}\leq |x_{even}|_{\tes}+|x_{odd}|_{\tes}\leq 2^{\frac{3}{4}}\,|x|_{\tes}, && \forall x\in\tes,\label{TITes}\\
2^{-\frac{3}{4}}\,|x|_{\tes}| y|_{\tes}\leq |x\cdot y|_{\tes}\leq 2^{\frac{3}{4}}\,|x|_{\tes}| y|_{\tes}, && \forall x,y\in\tes.\label{SITes}
\end{eqnarray}
\noindent {\bf Proof}: When $x\!=\!0$,  (\ref{TITes}-\ref{SITes}) hold. Let $0\!\neq\! x\in\tes$ with $|x_{even}|_{\tes}=\!a\!\geq\! 0$, $|x_{odd}|_{\tes}=\!b\!\geq\! 0$. Then $|x|_{\tes}=\!(a^4+b^4)^{\frac{1}{4}}\!>\!0$. The map $f(a,b)\!=\!(a+b)/(a^4+b^4)^{\frac{1}{4}}$ attains a unique absolute  maximum $f(a,a)=2^{\frac{3}{4}}$ when $a=b$. So $a+b\leq 2^{\frac{3}{4}}|x|_{\tes}$. Now, using  $ x=x_{even}+x_{odd}$ in (\ref{TesDecomp}-\ref{PairTes}) and the triangular inequality, we complete (\ref{TITes}).
We use  the pure factor composition equalities (\ref{SETes}), the triangle inequality, and (\ref{TITes}) to verify the second inequality in (\ref{SITes}):
\begin{equation}
 |x\cdot y|_{\tes}=|x_{even}\cdot y + x_{odd}\cdot y|_{\tes}\leq |x_{even}|_{\tes}| y|_{\tes} + |x_{odd}|_{\tes}| y|_{\tes}\leq 2^{\frac{3}{4}}\,|x|_{\tes}| y|_{\tes}.\nn
 \end{equation}
For verifying the first inequality in (\ref{SITes}) we use (\ref{Tesseranity1}).
Since $x\cdot \bar{x}$ is pure even and $|x\cdot\bar{x}|_{\tes}=|x|_{\tes}^2$, using (\ref{Tesseranity1}) and (\ref{SETes}) we find:
\begin{equation}
 |\bar{x}\cdot(x\cdot y)|_{\tes}=|y\cdot (x\cdot\bar{x})|_{\tes}= |y|_{\tes} |x\cdot\bar{x}|_{\tes}=|y|_{\tes} |x|_{\tes}^2. \label{SITes2}
\end{equation}
Now, using the proved second inequality in (\ref{SITes}) and $|\bar{x}|_{\tes}=|x|_{\tes}$ we find:
\begin{equation}
|\bar{x}\cdot(x\cdot y)|_{\tes}\leq 2^{\frac{3}{4}}\,|\bar{x}|_{\tes}|x\cdot y|_{\tes}=2^{\frac{3}{4}}\,|x|_{\tes}|x\cdot y|_{\tes}.\label{SITes3}
\end{equation}
We combine (\ref{SITes2}-\ref{SITes3}) into $|y|_{\tes} |x|_{\tes}^2\leq 2^{\frac{3}{4}}\,|x|_{\tes}|x\cdot y|_{\tes}$. Now, since $|x|_{\tes}>0$, then $2^{-\frac{3}{4}}\,|x|_{\tes}| y|_{\tes}\leq |x\cdot y|_{\tes}$. This completes (\ref{SITes}).
In diverse applications, taking $2^{\pm 1}$ instead of $2^{\pm\frac{3}{4}}$ in (\ref{TITes}-\ref{SITes}) does the job as well (see\cite{LWTetal}).  $\Box$
\end{theorem}

 There is only one independent cubic multilinear identity fulfilled by $\tes$:
\begin{equation}
(x,y,z)=(z,y,x)\  \ \forall\  x,y,z\ \in\tes.\label{Tesseranity3Lin}
\end{equation}
 The identity (\ref{Tesseranity3Lin}) is called {\bf antiflexibility law} \cite{Roda}, from which Lie--admissibility follows. Antiflexibility is frequently joined with third--power associativity which is not valid in $\tes$.  From (\ref{Tesseranity3Lin}) it follows no non trivial cubic polynomial identities in one variable, and  a unique cubic polynomial identity in two variables not involving conjugates for $\tes$ which we call the {\bf cubic identity in two variables}:
\begin{equation}
(x,x,y)=(y,x,x)\ \   \forall\  x,y\ \in\tes.\label{Tesseranity3}
\end{equation}
There are just two independent {\bf quartic identities in a single variable} (one follows from (\ref{Tesseranity3})):
\begin{equation}(x,x,x^2)=(x^2,x,x)=(x,x^2,x) \ \ \forall\  x\in\tes.\label{Tesseranity4y5}
\end{equation}
We give  just two quartic identities in two variables quadratic in each of them from the family of identities of this kind,  whose left sides coincide (see (\ref{Tesseranity3})):
\begin{eqnarray}
&&(x,x,y^2)=y\cdot(x,x,y)+(x,x,y)\cdot y,\label{Tesseranity22a}\\
&&(y^2,x,x)=y\cdot(y,x,x)+(y,x,x)\cdot y,\ \ \forall x,y\in\tes.\ \ \label{Tesseranity22b}
\end{eqnarray}
All alternative algebras satisfy (\ref{Tesseranity3}--\ref{Tesseranity22b}). A set of quintic multilinear identities just in terms of associators is fulfilled in $\tes$. $\forall x,y,u,v,w\in\tes$:
\begin{equation}
(x,y,(u,v,w))=((x,y,u),v,w)=-(u,(y,x,v),w)=(u,v,(x,y,w)).\label{Tesseranity5lin}
\end{equation}
The identities (\ref{Tesseranity3}) and (\ref{Tesseranity5lin}) hint on a new type of triple system with some resemblance to associative triple system of second kind and Jordan triple systems \cite{Meyberg}. Now, from (\ref{AssocTes}--\ref{AssocTes1}) $\forall q,r,s,t,u,v,x,y,z\in\!\tes$:
\begin{eqnarray}
(r,s,(t,u,v)\cdote(x,y,z))\!=\!(r,(t,u,v)\cdote(x,y,z),s)\!=
 \!((t,u,v)\cdote(x,y,z),r,s)\!=\!0,\label{Tesseranity8lin}\\
 (t,u,v)\cdote((x,y,z)\cdote(q,r,s))=-((t,u,v)\cdote(x,y,z))\cdote(q,r,s).\ \  \label{Tesseranity9lin}
 \end{eqnarray}
The reviewed identities provide a  tool to tell apart extensions. Some identities (or linear combination of them) hold and some don't, depending on the extension. A more systematic study of the identities, as those performed by I. Hentzel, M. Bremner in \cite{HB}  for other algebras, will be addressed for $\te$ and other  NNA division algebras in \cite{LWTetal}. We summarize our results for identities:
\begin{theorem}
$\te$ satisfies the  cubic  identities in (\ref{Tesseranity1}) and (\ref{Tesseranity3}--\ref{Tesseranity3Lin}), the quartic  identities in (\ref{QuartIdTes}--\ref{QuartIdTesA}) and (\ref{Tesseranity4y5}--\ref{Tesseranity22b}), and the multilinear identities (\ref{Tesseranity5lin}--\ref{Tesseranity9lin}).
\end{theorem}

\section{Grading group $G=\z_2\times\z_2\times \z_2$}
We use additive grading group notation. For a
twisted group algebra  $({\cal A};\z_2\times\z_2\times
\z_2,\rr,\{1,-1\},C)$ we adopt an ordered
   right--standard basis with respect to $\hh^R$: $[v_{(0,0,0)}\!\equiv\!1,\ v_{(1,0,0)},\ v_{(0,1,0)},\
v_{(1,1,0)}\!\equiv\! v_{(1,0,0)}\cdot v_{(0,1,0)},\ v_{(0,0,1)}$,
$v_{(1,0,1)}\!\equiv\! v_{(1,0,0)}\cdot v_{(0,0,1)},\ v_{(0,1,1)}\!\equiv\!
v_{(0,1,0)}\cdot v_{(0,0,1) },$ $v_{(1,1,1)}\!\equiv\! v_{(1,1,0)}\cdot
v_{(0,0,1) }]$.

From the adopted basis,
$C((1,0,0),(0,0,1))=1$ and so forth.
 ${\cal A}$ has
three $\z_2\times\z_2$--graded subalgebras generated by subbases,
whose structure constants for an adequate subbasis choice have to be as those for $\hh$ in order to avoid zero divisors. From these constraints, $C$ depends on 18 parameters in $ \{1,-1\}$:
\begin{center}\begin{tabular}{l||rrrr|rrrr}
$C $&(0,0,0)\!\!&(1,0,0)\!\!&(0,1,0)\!\!&(1,1,0)\!\!&(0,0,1)\!\!&
(1,0,1)\!\!&(0,1,1)\!\!&(1,1,1)\!\!\\
\hline \hline
(0,0,0)&1&1&1&1&1&1&1&1\\
(1,0,0)&1&-1&1&-1&1&-1&$\alpha_1$&$\alpha_2$\\
(0,1,0)&1&-1&-1&1&1&$\beta_1$&-1&$\alpha_3$\\
(1,1,0)&1&1&-1&-1&1&$\beta_2$&$\beta_3$&-1\\ \hline
(0,0,1)&1&-1&-1&-1&-1&1&1&1\\
(1,0,1)&1&1&$\delta_1$&$\delta_2$&-1&-1&$\rho_1$&$\rho_2$\\
(0,1,1)&1&$\gamma_1$&1&$\delta_3$&-1&$\omega_1$&-1&$\rho_3$\\
(1,1,1)&1&$\gamma_2$&$\gamma_3$&1&-1&$\omega_2$&$\omega_3$&-1
\end{tabular}\\
{\bf Table VI}: Structure constant of $G=\z_2\times\z_2\times
\z_2$ in $\{1,-1\}$\end{center}
We adopt the basis elements to be labeled by the sub--indices
$0,1,2,3, 4,5,6,7$ instead of $(0,0,0),(1,0,0),(0,1,0),(1,1,0)$,
$(0,0,1)$, $(1,0,1),(0,1,1),(1,1,1)$ respectively. From proposition 2(i) $R(v_g)^2=\!L(v_g)^2=\!-I, \ \forall e\neq g\in G$. This leads to 15 constraints. There remain 3 independent parameters, say $\delta_3, \gamma_2, \gamma_3$. Now, from proposition 2(ii),  $(R(v_{2})R(v_{5}))^2=\!(R(v_{2})R(v_{7}))^2=\!
(R(v_{2})R(v_{4}))^2=\!-I$. From it we obtain $-\delta_3=-\gamma_2=\gamma_3=1$. We find the necessary
constraints:
$-\alpha_1=\alpha_2=-\alpha_3=\beta_1=-\beta_2=\beta_3=-\delta_1=\delta_2=-\delta_3=
\gamma_1=-\gamma_2=\gamma_3=-\rho_1=\rho_2=-\rho_3=\omega_1=-\omega_2=\omega_3=1$.
For the chosen  basis there is a unique  structure
constant  that avoids zero divisors, leading to the multiplication table:
\begin{center}\begin{tabular}{l||rrrr|rrrr}
$\oo^R $&$v_0$&$v_1$&$v_2$&$v_3$&
$v_4$&$v_5$&$v_6$&$v_7$\\
\hline \hline
$v_0$&$v_0$&$v_1$&$v_2$&$v_3$&$v_4$&$v_5$&$v_6$&$v_7$\\
$v_1$&$v_1$&$-v_0$&$v_3$&$-v_2$&$v_5$&$-v_4$&$-v_7$&$v_6$\\
$v_2$&$v_2$&$-v_3$&$-v_0$&$v_1$&$v_6$&$v_7$&$-v_4$&$-v_5$\\
$v_3$&$v_3$&$v_2$&$-v_1$&$-v_0$&$v_7$&$-v_6$&$v_5$&$-v_4$\\ \hline
$v_4$&$v_4$&$-v_5$&$-v_6$&$-v_7$&$-v_0$&$v_1$&$v_2$&$v_3$\\
$v_5$&$v_5$&$v_4$&$-v_7$&$v_6$&$-v_1$&$-v_0$&$-v_3$&$v_2$\\
$v_6$&$v_6$&$v_7$&$v_4$&$-v_5$&$-v_2$&$v_3$&$-v_0$&$-v_1$\\
$v_7$&$v_7$&$-v_6$&$v_5$&$v_4$&$-v_3$&$-v_2$&$v_1$&$-v_0$\\
\end{tabular}\\
{\bf Table VII}: Multiplication table of $({\oo^R};\z_2\times\z_2\times\z_2,\rr, \{1,-1\},$ $C_{\oo^R})$\end{center}
 The resulting unique NNA division $\rr$--algebra is the {\bf octonion algebra $\oo^R$}, with $\oo^R\!\simeq\!(\oo^R)^{opp}\!=\!\oo^L$. The conjugation and involution $x\mapsto \bar{x}\!=\![x_0,-x_1,-x_2$, $-x_3,-x_4, -x_5,-x_6,-x_7]_{\oo^R}$ leads to the composition norm $x\mapsto |x|_{\hh}=(x\bar{x})^{\frac{1}{2}}$.

\section{Grading group $G=\z_2\times\z_4$}
 We adopt $G\!=\!\z_2\times\z_4$ with additive notation. We look for necessary  and sufficient conditions for a twisted group algebra
$({\cal A}; \z_2\times\z_4,\rr, \{1,-1\},C)$ without zero divisors. Also conforming to the definition \ref{StandardBasis},
 an  ordered  left--standard basis with respect to $\tes$
for ${\cal A}$ has the form: $[ v_{(0,0)}\!\equiv\! 1,
v_{(0,1)}\!\equiv\! \w,v_{(0,2)}\equiv\! \w\cdot\w\equiv\!\w^2,
v_{(0,3)}\!\equiv\! \w\cdot\w^2\!\equiv\!\w^3, v_{(1,0)}\!\equiv\!\ipp,
v_{(1,1)}\!\equiv\! \ipp\cdot\w, v_{(1,2)}\!\equiv\! \ipp\cdot \w^2,
v_{(1,3)}\!\equiv\! \ipp\cdot \w^3]$. To alleviate notation, we substitute on occasion the group elements $(0,0),(0,1),(0,2),(0,3),
(1,0),(1,1), (1,2),(1,3)$ (particularly when appearing as sub--indices) by $0,1,2,3,4$, $5,6,7$ respectively, e.g. instead of $y_{(1,2)}$ we write $y_6$. The subalgebras without zero divisors spanned by $[1,\w,\w^2,\w\!\cdot\! \w^2]$ resp. $[ 1,\w^2,\ipp, \ipp\!\cdot\!\w^2]$  are  resp.  $\tes$  and  $\hh^L$ algebras, leading to unique structure constants $C_{\tes}$ resp. $(C_{\hh})^{tr}$. Basis and subalgebra constraints  lead to a $C$  depending on 30 parameters in $\{1,-1\}$:
\begin{center}\begin{tabular}{l||rrrr|rrrr}
$C $&(0,0\!\!&(0,1)\!\!&(0,2)\!\!&(0,3)\!\!&(1,0)\!\!&
(1,1)\!\!&(1,2)\!\!&(1,3)\!\!\\
\hline \hline
(0,0)&1&1&1&1&1&1&1&1\\
(0,1)&1&1&1&-1&$\delta_1$&$\delta_2$&$\delta_3$&$\delta_4$\\
(0,2)&1&-1&-1&1&-1&$\delta_5$&1&$\delta_6$\\
(0,3)&1&1&-1&1&$\delta_7$&$\delta_8$&$\delta_9$&$\delta_0$\\
\hline
(1,0)&1&1&1&1&-1&$\beta_1$&-1&$\beta_2$\\
(1,1)&1&$\gamma_2$&$\gamma_3$&$\gamma_4$&$\alpha_1$&
$\alpha_2$&$\alpha_3$&$\alpha_4$\\
(1,2)&1&$\gamma_5$&-1&$\gamma_6$&1&$\alpha_5$&-1&$\alpha_6$\\
(1,3)&1&$\gamma_8$&$\gamma_9$&$\gamma_0$&$\alpha_7$&
$\alpha_8$&$\alpha_9$&$\alpha_0$\\
\end{tabular}\\
\noindent {\bf Table VIII}: Structure constant for $G=\z_2\times\z_4$ in
$\{1,-1\}$\end{center}

From Proposition 2(i) $R(v_g)^{|g|}=-I, \ \forall e\neq g\in G$. From them we obtain the conditions $\alpha_1\delta_1=\alpha_3\delta_9 = \alpha_7\delta_7= \alpha_9\delta_3= \gamma_9\gamma_3 = \alpha_8\delta_5\alpha_2 = \delta_6\alpha_0\alpha_4 = \gamma_6\gamma_0\gamma_4 = \gamma_8\gamma_5\gamma_2 = \delta_4\alpha_6\delta_0\beta_2 = \delta_8\alpha_5\delta_2\beta_1 = -1$. Now from,  $L(v_g)^{|g|}=-I, \ \forall e\neq g\in G$ we obtain  $\beta_1  = \beta_2  = \alpha_5 \gamma_6  = \alpha_6 \gamma_5  = \delta_6 \delta_5  = \alpha_8 \gamma_9 \alpha_0  = \delta_3 \delta_2 \delta_1 \delta_4  = \delta_7 \delta_8 \delta_9 \delta_0  = \gamma_0 \alpha_7 \gamma_8 \alpha_9  = \gamma_2 \alpha_1 \gamma_4 \alpha_3  = \gamma_3 \alpha_2 \alpha_4  = -1$. Both sets of conditions lead to 20 constraints $\alpha_4  = -\delta_6 \alpha_0 ,\   \alpha_6  = \delta_3 \delta_2 \delta_0 \alpha_1 ,\  \alpha_7  = -\delta_0 \alpha_3 \delta_2 \alpha_5 ,\  \alpha_8  = \delta_6 \alpha_2 ,\ \alpha_9  = -\delta_3 ,\  \beta_1  = -1,\  \beta_2  = -1,\  \delta_1  = -\alpha_1 ,\  \delta_4  = \alpha_1 \delta_3 \delta_2 ,\  \delta_5  = -\delta_6 ,\  \delta_7  = \alpha_3 \delta_2 \alpha_5 \delta_0 ,\  \delta_8  = \delta_2 \alpha_5 ,\  \delta_9  = -\alpha_3 ,\  \gamma_2  = -\gamma_0 \alpha_1 \alpha_5 \alpha_3 ,\  \gamma_3  = \delta_6 \alpha_0 \alpha_2 ,\  \gamma_4  = \alpha_5 \gamma_0 ,\   \gamma_5  = -\delta_0 \alpha_1 \delta_3 \delta_2 ,\ \gamma_6  = -\alpha_5 ,\  \gamma_8  = -\delta_3 \alpha_3 \delta_2 \alpha_5 \gamma_0 \delta_0 ,\     \gamma_9  = -\alpha_2 \delta_6 \alpha_0$, and there remain 10 free parameters. From Proposition 2(ii) $(R(v_{4})R(v_{j}))^4=(R(v_{4})R(v_{k}))^2=(L(v_{4})L(v_{j}))^4=(L(v_{4})L(v_{k}))^2=-I$ for $k\in\{2,6\}$, and  $j\in\{1,3,5,7\}$. This will provide 5 further constraints. From  Proposition 2(ii) $(R(v_{1})^3 R(v_{7}))^2=-I$. This provides a further constraint. Using these 6 new constraints, and in terms of $ \alpha_0, \alpha_1, \alpha_3$, and $\delta_0$ the other  26 parameters become: $\alpha_2  = -\alpha_1 \alpha_3 \alpha_0 ,\  \alpha_4  = \alpha_0 ,\  \alpha_5  = -\alpha_3 \alpha_1 ,\  \alpha_6  = \alpha_3 \alpha_1 ,\  \alpha_7  = -\alpha_3 ,\  \alpha_8  = \alpha_1 \alpha_3 \alpha_0 ,\  \alpha_9  = \alpha_1 ,\  \beta_1  = -1,\  \beta_2  = -1,\  \delta_1  = -\alpha_1 ,\  \delta_2  = -\alpha_3 \alpha_1 \delta_0$, $\delta_3  = -\alpha_1 ,\  \delta_4  = \alpha_1 \alpha_3 \delta_0 ,\  \delta_5  = 1,\  \delta_6  = -1,\  \delta_7  = \alpha_3 ,\  \delta_8  = \delta_0 ,\  \delta_9  = -\alpha_3 ,\  \gamma_0  = -\alpha_0 \delta_0 ,\  \gamma_2 =-\alpha_0 \delta_0 ,\  \gamma_3  = \alpha_1 \alpha_3 ,\  \gamma_4  = \alpha_3 \alpha_0 \delta_0 \alpha_1 ,\  \gamma_5  = -\alpha_1 \alpha_3 ,\  \gamma_6  = \alpha_1 \alpha_3 ,\  \gamma_8  = -\alpha_1 \alpha_3 \delta_0 \alpha_0 ,\  \gamma_9  = -\alpha_1 \alpha_3$. Such relations seem  to satisfy all the conditions arising from Proposition 2. There remain 16 choices to be considered. We perform now concrete computations of the determinant of $R(y)$ for particular $y$ values:
\begin{eqnarray}
&&\left. \det R(y)\right|_{y_0=y_2=y_6=y_5=y_7=0}\nn\\
&&\ \ \ \ =(y_4^4+2(\alpha_3 +\alpha_1 )y_3\, y_1\, y_4^2+y_1^4+2\,y_3^2\,y_1^2+y_3^4)\nn\\
&&\ \ \ \ \ \ \ (y_4^4-2\,\alpha_0\, \delta_0 (\alpha_1 +\alpha_3 )y_3 \,y_1\  y_4^2+
y_1^4+2\,y_3^2\,y_1^2+y_3^4
).\label{MLy4y1y3}
\end{eqnarray} This vanishes for $\alpha_3=\alpha_1$ with $y_3=-\alpha_1\,y_1$, and $y_4=2^{1/2}|\alpha_1\,y_1|$. There remain 8 choices when we adopt the necessary constraint $\alpha_3=-\alpha_1$. Now,
\begin{eqnarray}
&&\left. \det R(y)\right|_{y_2=y_6=y_3=y_5=y_7=0, \alpha_3=-\alpha_1}\nn\\
&&\ \ \ \ =(y_1^8+(2\,y_4^4+4\,(\alpha_0\,\delta_0(\alpha_1-1)+\alpha_1)
\,y_0^2\,y_4^2+2\,y_0^4)\,y_1^4\nn\\
&&\ \ \ \ \ \ +4\,y_4^6\,y_0^2+6\,y_4^4\,y_0^4+4\,y_0^6\,y_4^2+y_4^8+y_0^8).\label{MLy1y0y4}
\end{eqnarray}
For $\alpha_0\,\delta_0=1=-\alpha_1$ with $y_4=-y_0$, and $y_1=2^{1/2}|y_0|$ this determinant vanishes. Hence, the cases $\alpha_1=-1$ with $\delta_0=\alpha_0=\pm 1$ lead to zero divisors. There remain 6 choices left. We inquire the case in which $\delta_0=-\alpha_0$:
\begin{eqnarray}
&&\left. \det R(y)\right|_{y_0=y_2=y_4=y_3=y_7=0, \alpha_3=-\alpha_1, \delta_0=-\alpha_0}\nn\\
&&\ \ \ \ =(y_6^4+2\,(1-\alpha_0 )\,y_1 \,y_5 \,y_6^2+y_5^4+2\,y_1^2\,y_5^2+y_1^4)\nn\\
&&\ \ \ \ \ \ \ (y_6^4-2\,(1-\alpha_0 )\,y_1 \,y_5 \,y_6^2+y_5^4+2\,y_1^2\,y_5^2+y_1^4
).\label{MLy6y5y1}
\end{eqnarray}
This determinant vanishes for $\delta_0=-\alpha_0=1$ and $\alpha_1=\pm 1$ with $y_5=-y_1$, $y_6=2^{1/2}|y_1|$. This parameter choices are disjoint with the cases already excluded. There remain 4 choices left, associated with four algebras $\BB_1^L, \BB_2^L, \BB_3^L$, and $\BB_4^L$, whose structure constant and multiplication table are given by:
\begin{center}\begin{tabular}{l||rrrr|rrrr}
$\BB_i^L$&$v_0$&$v_1$&$v_2$&$v_3$&$v_4$&
$v_5$&$v_6$&$v_7$\\
\hline \hline
$v_0$&$v_0$&$v_1$&$v_2$&$v_3$&$v_4$&$v_5$&$v_6$&$v_7$\\
$v_1$&$v_1$&$v_2$&$v_3$&$-v_0$&$-\alpha_{{1}}v_5$&$\delta_{{0}}v_6$&
$-\alpha_{1}v_7$&$-\delta_{{0}}v_4$\\
$v_2$&$v_2$&$-v_3$&$-v_0$&$v_1$&$-v_6$&$v_7$&$v_4$&$-v_5$\\
$v_3$&$v_3$&$v_0$&$-v_1$&$v_2$&$-\alpha_{{1}}v_7$&$\delta_{{0}}v_4$&
$\alpha_{{1}}v_5$&$\delta_{{0}}v_6$\\
\hline
$v_4$&$v_4$&$v_5$&$v_6$&$v_7$&$-v_0$&$-v_1$&$-v_2$&$-v_3$\\
$v_5$&$v_5$&$-\alpha_{{0}}\delta_{{0}}v_6$&$-v_7$&$-\alpha_{{0}}\delta_{{0}}v_4$&
$\alpha_{{1}}v_1$&$\alpha_{{0}}v_2$&$-\alpha_{{1}}v_3$&$\alpha_{{0}}v_0$\\
$v_6$&$v_6$&$v_7$&$-v_4$&$-v_5$&$v_2$&$v_3$&$-v_0$&$-v_1$\\
$v_7$&$v_7$&$\alpha_{{0}}
\delta_{{0}}v_4$&$v_5$&$-\alpha_{{0}}\delta_{{0}}v_6$&$\alpha_{{1}}v_3$&
$-\alpha_{{0}}v_0$&$\alpha_{{1}}v_1$&$\alpha_{{0}}v_2$
\end{tabular}\\
\noindent {\bf Table XI}: Multiplication table of $({\BB_i^L};\z_2\times\z_4,\rr, \!\{1,-1\},$ $\!C_{\BB_i^L}\!)$, with  $(\alpha_0,\alpha_1$, $\delta_0)\!=\!$
 $(1, 1,-1)$ for $\BB_1^L$,
 $(1,-1,-1)$ for $\BB_2^L$,
 $(1, 1, 1)$ for  $\BB_3^L$,
 $(-1, 1,-1)$ for $\BB_4^L$.\end{center}

Instead of algebras  $\BB_i^L$, for $i=1,2,3,4$, we
speak generically of {\bf $\BB_i^L$--algebras}.  Each $\BB_i^L$--algebra fulfills that $\forall y\in \BB_i^L$, $\det R(y) =\det L(y)$  is positive definite:
\begin{equation}
\left.\det R(y)\right|_{\BB_i^L}\left.=\det L(y)\right|_{\BB_i^L}=\left((y_0^2+y_2^2+y_4^2+y_6^2)^2+
(y_1^2+y_3^2+y_5^2+y_7^2)^2\right)^2.\nn\\
\end{equation}
Hence, {\bf each $\BB_i^L$--algebra is thus  a NNA division $\rr$--algebra}.

\begin{definition} We call $\rr\,1$ {\bf the real subalgebra of $\BB_i^L$}. We call $\hh^{\BB_i}:=\rr\oplus\rr\w^2$ $\oplus\rr\,\ipp\oplus\rr\,\ipp\!\cdot\!\w^2=$ $\rr\oplus\rr\w^2\oplus\rr\,\ipp\oplus\rr\,\w^2\!\cdot\!\ipp$ the {\bf quaternion subalgebra} or {\bf set of even elements of  $\BB_i^L$}. We call  $\BB_i^{odd}:=\rr\w\oplus\rr\w^3\oplus\rr\,\ipp\!\cdot\!\w\oplus\rr\,\ipp\!\cdot\!\w^3=$ $\rr\w\oplus\rr\w^2\!\cdot\!\w\oplus\rr\,\w\!\cdot\!\ipp\oplus\rr\,(\w^2\!\cdot\!\w)\!\cdot\!\ipp$ the {\bf set of odd elements of  $\BB_i^L$}.
\end{definition}

In all $\BB_i^L$--algebras $\w\cdot \w^2=-\w^2\cdot \w$, and $\ipp\cdot(\w\cdot \w^2)=-(\ipp\cdot\w)\cdot\w^2$.  So, $\w^2$ and $\ipp$ are not universally associative. We conclude  easily that  the {\bf center as well as the nucleus} of each $\BB_i^L$--algebra is  $\rr\,1$. The $\BB_i^L$ algebras are neither power associative, nor alternative, nor isomorphic to $\oo$. The $\BB_i$--algebras  are {\bf ``Bi--representable''} since each can be represented in terms of pairs  of $\hh$-- {\bf and} in terms of pairs of $\te$--elements. We could suggest the name of {\bf ``Bison algebras''} for them. We denote each  $\BB_i^L$--algebra element in diverse ways: $\forall x\in\BB_i^L$
\begin{eqnarray}
x&\equiv& x_0+x_1\,\w+x_2\,\w^2+x_3\,\w\cdot \w^2+\ipp(x_4+x_5\,\w+x_6\,\w^2+x_7\,\w\cdot \w^2)\nn\\
&=:&[x_0,x_1,x_2,x_3,x_4,x_5,x_6,x_7]_{\BB_i^L}
=x_{{\rm even}}+x_{{\rm odd}}=x_{{\rm left}}+x_{{\rm right}}\\
&&=(X_{Even},X_{Odd})_{\BB_i^L}=\{ X_{Left},X_{Right}\}_{\BB_i^L},\\
x_{{\rm
even}}&:=&[x_0,0,x_2,0,x_4,0,x_6,0]_{\BB_i^L},\ \ X_{Even}:=[x_0,x_2,x_4,x_6]_{\hh^L},\nn\\
x_{{\rm odd}}&:=&[0,x_1,0,x_3,0,x_5,0,x_7]_{\BB_i^L},\ \
X_{Odd}:=[x_1,x_3,x_5,x_7]_{\hh^L},\nn\\
x_{{\rm
left}}&:=&[x_0,x_1,x_2,x_3,0,0,0,0]_{\BB_i^L},\ \
X_{Left}:=[x_0,x_1,x_2,x_3]_{\tes},\nn\\
x_{{\rm right}}&:=&[0,0,0,0,x_4,x_5,x_6,x_7]_{\BB_i^L},\ \
X_{Right}:=[x_4,x_5,x_6,x_7]_{\tes}.
\end{eqnarray}
Analogous expressions are defined for the elements of $(\BB_i^L)^{opp}$ and $\BB_i^R$ in terms of their normalized right--standard bases. As done for $\tes$, we can find projector operators of even and odd parts. For instance, for $\BB_1^L$, $P_e(x):=\sum_{g\in G}b_g\,U_g(x)$, where $U_g(x)=(v_g\cdot x)\cdot v_{g^{-1}}$,  $b=\frac{1}{2}(0,-1,-2,1,2,1,2,-1)$, and $X_{Odd}=\w\cdot(\w^2\cdot(1-P_e)(x))$. Similar expressions can be obtained for the other algebras.

The sets of even and odd elements reveal a $\z_2$--grading in $\BB_i^L$ algebras. Unlike $\tes$, given $x\in\BB_i^L$ the associator  $(x,x,x)$ is not odd in general. Neither do all odd elements anti--associate among them, but $x^2\cdot x\!=\!-x\cdot x^2\ \forall x\in\BB_i^{odd}$.  $\hh^{\BB_i}$ with the left--standard basis $[1,\w^2,\ipp,\ipp\,\w^2]$  is isomorphic to $\hh^{L}$. The subalgebras generated by any $x\in\BB_i^L$ satisfying $0>x^2\in \rr\,1$ are {\bf subalgebras isomorphic to $\cc$}. Direct computation shows that {\bf all the subalgebras isomorphic to $\cc$ in $\BB_i^L$ are of the form ${\cal A}_\cc=\rr 1\oplus \rr I\simeq\cc$ with $0\neq I  \in \rr \w^2\oplus\rr \ipp\oplus\rr \ipp\!\cdot\!\w^2$}. Since all non real elements in $\hh$ generate a subalgebra isomorphic to $\cc$, we conclude that {\bf $\hh^{\BB_i}$ is the only subalgebra isomorphic to $\hh$ in $\BB_i^L$}.

The even and the odd
elements are not universally associative. Parenthesis
matter in general products with three factors even if two of the factors are in $\hh^{\BB_i}\cup\BB_i^{odd}$.
Multiplication by even elements of $\BB_i^L$ from
the left (or right) does not define a quaternion--module structure  in $\BB_i^L$.
 For all $a,b$ in a fixed subalgebra  ${\cal A}_\cc$ isomorphic to $\cc$  we have $a\cdot(b\cdot x)=(a\cdot b)\cdot x$ and $x\cdot(a\cdot b)=(x\cdot a)\cdot b\ \forall\ x\in \BB_i^L$. Hence, {\bf the $\BB_i^L$ are left and right $\cc$--modules with respect to any of their subalgebras ${\cal A}_\cc\simeq\cc$}. The algebras  {\bf $\BB_1^L$, $\BB_3^L$ and $\BB_4^L$  are  $\cc$--bimodules with respect to any  such ${\cal A}_\cc\simeq\cc$} since $\forall a,b\in {\cal A}_\cc$ we have  $a\cdot(x\cdot b)=(a\cdot x)\cdot b$ for all $x$ in these $\BB_i^L$'s. While $\BB_2^L$ {\bf is a $\cc$--bimodule with respect to the  subalgebras of the form ${\cal A}_\cc=\rr 1\oplus \rr I\simeq\cc$ with $0\neq I  \in \rr\w^2$ or $0\neq I \in \rr\ipp\oplus\rr\ipp\!\cdot\!\w^2$}.

We explore anti--isomorphisms
(bijections reversing factors order). $(\BB_1^L)^{opp}$ with  right--standard basis $[1,\w,\w^2,\w^2\!\cdot\w,
\ipp$, $\w\cdot\!\ipp,\w^2\!\cdot\!\ipp,(\w^2\!\cdot\w)\cdot\!\ipp]$ has structure constant
 $C_{\BB_1^L}$ transposed.
We  move to a  left--standard basis $[1,\w,\w^2, \w\!\cdot\!\w^2,
\ipp$, $\ipp\!\cdot\!\w,\ipp\!\cdot\!\w^2,\ipp\!\cdot\!(\w\!\cdot\!\w^2)] =[1,\w,\w^2,-\w^2\!\cdot\!\w,
\ipp,-\w\!\cdot\!\ipp,-\w^2\!\cdot\!\ipp,(\w^2\!\cdot\!\w)\!\cdot\!\ipp]$, and the resulting structure constant
turns out to be $C_{\BB_3^L}$. So,   $\BB_3^L$ and $(\BB_1^L)^{opp}$
{\bf are $\rr$--algebra isomorphic}, $(\BB_1^L)^{opp}\simeq\BB_3^L$. We have thus an
{\bf anti--isomorphism} $\BB_1^L\!\rightarrow \BB_3^L$:
\begin{equation}
~\!\![x_0,x_1,x_2,x_3,x_4,x_5,x_6,x_7]_{\BB_1^L}\mapsto
[x_0,x_1,x_2,-x_3,x_4,x_5,-x_6,-x_7]_{\BB_3^L}.\label{DAntiInvB13}\
\end{equation}
Similarly we find that {\bf the algebras $\BB_2^L$ and $\BB_4^L$ are  each isomorphic to their own opposite algebras: $\BB_2^L\simeq(\BB_2^L)^{opp}$, and $\BB_4^L\simeq(\BB_4^L)^{opp}$}. Therefore they have involutions.  We call
{\bf the diagonal involution in $\BB_2^L$}:
\begin{eqnarray}
&&x\mapsto\tilde{x}=[x_0,x_1,x_2,-x_3,x_4,x_5,-x_6,-x_7]_{\BB_2^L}.
\label{DAntiInvB2}
\end{eqnarray}
We call {\bf the diagonal involution in $\BB_4^L$}:
\begin{eqnarray}
&&x\mapsto\tilde{x}=[x_0,x_1,x_2,-x_3,x_4,-x_5,-x_6,x_7]_{\BB_4^L}.
\label{DAntiInvB4}
\end{eqnarray}

Let $e_{i,j}$ be the matrix  where the only nonzero entry is a $1$ in the $i^{\text{\tiny th}}$ row, $j^{\text{\tiny th}}$ column. The derivation algebras  ${\rm Der}(\BB_i^L)$ for $i=1,2,3$ have generators:
\begin{eqnarray}
&D_0=& e_{2,4}-e_{4,2}-2\,(e_{5,7}-e_{7,5})-(e_{6,8}-e_{8,6})\ \ {\rm of\ degree} \ (0,2)\in G,\ \ \label{DAlgB123}\\
&D_1=&e_{2,4}-e_{4,2}+(e_{6,8}-e_{8,6})\ \ {\rm of\ degree} \ (0,2)\in G,\ \ \nn\\ &D_2=&e_{2,6}-e_{6,2}-(e_{4,8}-e_{8,4})\ \ {\rm of\ degree} \ (1,0)\in G,\ \ \nn\\ &D_3=&-(e_{2,8}-e_{8,2})-(e_{4,6}-e_{6,4})\ \  {\rm of\ degree} \  (1,2)\in G.\ \ \label{DAlgB123a}
\end{eqnarray}
While the ${\rm Der}(\BB_4^L)$ has generators (again as linear operators):
\begin{eqnarray}
&D_0=& e_{2,4}-e_{4,2}+(e_{6,8}-e_{8,6})\ \ {\rm of\ degree} \ (0,2)\in G,\ \ \label{DAlgB4}\\
&D_1=&e_{2,4}-e_{4,2}-2\,(e_{5,7}-e_{7,5})-(e_{6,8}-e_{8,6})\ \
{\rm of\ degree} \ (0,2)\in G,\ \ \nn\\
&D_2=&e_{2,6}-e_{6,2}-2\,(e_{3,7}-e_{7,3})+(e_{4,8}-e_{8,4})\ \
{\rm of\ degree} \ (1,0)\in G,\ \ \nn\\
&D_3=&-(e_{2,8}-e_{8,2})-(e_{4,6}-e_{6,4})\ \  {\rm of\ degree} \  (1,2)\in G.\ \ \label{DAlgB4a}
\end{eqnarray}
 {\bf For each $\BB_i^L$--algebra, ${\rm Der}(\BB_i^L)\simeq\!u(1)\oplus su(2)$} since $[D_0, D_i]=0$, $[D_i,D_j]=2\,D_k$ for $[i,j,k]=\{[1,2,3],[2,3,1],[3,1,2]\}$. This conforms again to the results in \cite{GMBandJMO}. Hence $\oo\not\simeq\BB_i^L$. In each  ${\rm Der}(\BB_i^L)$ algebra the generators can be written as a quadratic expression in the operators $L(v_g), R(v_g)$ with $g\in G$. E.g., in $\BB_4^L$ we have $D_0=R(\w)R(\w)-L(\w)L(\w)
 +R(\ipp\!\cdot\!\w)R(\ipp\!\cdot\!\w)-L(\ipp\!\cdot\!\w)L(\ipp\!\cdot\!\w)$.

Each $\BB_i^L$ is generated by the elements $v_{(0,1)}\equiv \w$ and
$v_{(1,0)}\equiv\ipp$. We adopt a left--standard basis generated by  generic elements $\w'$ and $\ipp'$  in $\BB_i^L$ satisfying
\begin{equation}
\ipp'\cdot\ipp'=\w'^2\cdot \w'^2=(\ipp'\cdot \w'^2)^2=-1,\ \  \w'\cdot(\w'\cdot \w'^2)=-(\w'\cdot \w'^2)\cdot \w'=-1.\label{condsBi}\end{equation}
They are necessary
constraints for the absence of zero divisors in their proper subalgebras.  If we consider a $\ipp'\in \BB_i^L,\ i\in\{1,2,3\}$, such that  $\ipp'\cdot\ipp'=-1$, we obtain that
$\ipp'=[0,0,b_2,0,b_4,0,b_6,0]_{\BB_i^L}$, with $b_2^2+b_4^2+b_6^2=1$.
If $\w'\in \BB_i^L,\ i\in\{1,2,3\}$ generates a 4--
dimensional subalgebra with $\w'^2\cdot \w'^2=-1$, we obtain
\begin{equation}
\w'=[0,a_1,0,a_3,0,a_5,0,a_7]_{\BB_1^i},\ \ {\rm with}\  \
a_1^2+a_3^2+a_5^2+a_7^2=1,\ i\in\{1,2,3\}.\label{Cons1}
\end{equation}
Such $\w'$   turns out also  to fulfil  $\w'\cdot(\w'\cdot
\w'^2)=-(\w'\cdot \w'^2)\cdot \w'=-1$  and generates a $\tes$--subalgebra. The set of choices  in (\ref{Cons1}) constitute a manifold $S^3$ isomorphic to the Lie group $SU(2)$. If we require that  $(\w'^2\cdot \ipp')^2=-1$   so that $\w'^2$
and $\ipp'$  generate $\hh^{\BB_i^L}$, we obtain for the algebras $\BB_i^L, i\in\{1,2,3\}$,  that
$b_2=0$;    that is
\begin{equation}
\ipp'=[0,0,0,0,b_4,0,b_6,0]_{\BB_i^L},\ \ {\rm with}\ \ i\in\{1,2,3\}\ {\rm and} \ b_4^2+b_6^2=1.\label{Cons2}
\end{equation}
This set of choices builds a manifold $S^1$ homeomorphic to the Lie group $U(1)$. The choices of $b_4,b_6$ in (\ref{Cons2}) are independent of the choices of $a_1,a_3,a_5,a_7$ in (\ref{Cons1}).
Now, using the left--standard basis
$[1,\w',\w'^2,\w'^3,\ipp',$ $\ipp'\,\w',\ipp'\,\w'^2,$ $\ipp'\,\w'^3]$   we compute
$C'_{\BB_i^L}$  for all choices of
$\w'$  and  $\ipp'$ in (\ref{Cons1}-\ref{Cons2}) to obtain
$C'_{\BB_i^L}=C_{\BB_i^L}$ always. So, the algebras $\BB_1^L$, $\BB_2^L$, and $\BB_3^L$ are not isomorphic to any of the other algebras since  under an ordered left--standard basis  with respect to $\tes$ their structure constant is unique. This precludes a basis change from such algebras arriving to a structure constant $C_{\BB_4^L}$.  Hence,  there is {\bf no isomorphisms} between the different $\BB_i^L$--algebras, and {\bf under an ordered left--standard basis with respect to $\tes$ each has a unique structure constant}. We have a pair of algebras non isomorphic to their opposite algebras as announced.

To make precise the form of the automorphisms, we present the {\bf right--Cayley--Dickson decomposition of the  product ``$\cdot_i$'' of the $(\BB_i^L)^{opp}$ algebras with respect to their unique subalgebra $\hh^{\BB_i}$}. Let $(z',z'')_R$ and $(y',y'')_R$ be right--Cayley--Dickson pairs of elements in a  fixed $(\BB_i^L)^{opp}$, then:
\begin{eqnarray}
(z',z'')_R\  \cdot_{1}\  (y',y'')_R&=& (z'\,y'+\overline{y''}\,z''\,\ip,
-y''\,\ip\,z'\,\ip-z''\,\ip\,\overline{y'}\,\ip)_R,\label{CDB1}\\
(z',z'')_R \ \cdot_{3}\  (y',y'')_R&=& (z'\,y'+\ip\,\overline{y''}\,z'',
-y''\,\ip\,z'\,\ip-z''\,\ip\,\overline{y'}\,\ip)_R,\label{CDB2}\\
(z',z'')_R\  \cdot_{2}\  (y',y'')_R&=& (z'\,y'+\overline{y''}\,z''\,\ip,
y''\,z'-z''\,\ip\,\overline{y'}\,\ip)_R,\label{CDB3}\\
(z',z'')_R\  \cdot_{4}\  (y',y'')_R&=& (z'\,y'-\ip\,\overline{y''}\,\ip\,z''\,\ip,
-y''\,\ip\,z'\,\ip-z''\,\ip\,\overline{y'}\,\ip)_R,\label{CDB4}\ \
\end{eqnarray}
with $z', z'', y', y'', \ip\in \hh^{\BB_i}\equiv\hh^R$, where we identified $\w^2\equiv\ip$ using the standard notation in $\hh^R$ (no parenthesis since $\hh^R$ is associative). This reveals the algebras as $\oo^R$ with a modified product from which their automorphisms follow. Recall that $(\BB_i^L)^{opp}=\hh^{\BB_i}\oplus\BB_i^{odd}$ as a vector space. Let $\phi:(\BB_i^L)^{opp}\rightarrow (\BB_i^L)^{opp}$ be an automorphism. Since $\phi$ is a linear bijection and the quaternion subalgebra is unique $\phi(\hh^{\BB_i})=\hh^{\BB_i}$, and  $\phi(\BB_i^{odd})=\BB_i^{odd}$. Hence $\phi((z',z'')_R)=(\phi_1(z'),\phi_2(z''))_R$, with  $\phi_1,\phi_2:\hh^R\rightarrow\hh^R$ both linear bijective transformations. Since $\phi_1$ is an $\hh^R$ automorphism, $\phi_1(z')=c\,z'\,\bar{c}$ for some $0\neq c\in\hh^R$, $c\,\bar{c}=1$. We exemplify the obtention of the automorphisms of the diverse $(\BB_i^L)^{opp}$ algebras using the $(\BB_4^L)^{opp}$ as a concrete example. Let $(z',z'')_R,(y',y'')_R\in (\BB_4^L)^{opp}$:
\begin{eqnarray}
(\phi_1(z'\,y'-\ip\,\overline{y''}\,\ip\,z''\,\ip),
\phi_2(-y''\,\ip\,z'\,\ip-z''\,\ip\,\overline{y'}\,\ip))_R\hspace{3.5cm}&&\nn\\
\ \ =(\phi_1(z'),\phi_2(z''))_R \ \cdot_{4}\ (\phi_1(y'),\phi_2(y''))_R\nn\hspace{4.5cm}&&\\
=(\phi_1(z')\phi_1(y')-\ip\overline{\phi_2(y'')}\ip\phi_2(z'')\ip ,
-\phi_2(y'')\ip\phi_1(z')\ip-\phi_2(z'')\ip\overline{\phi_1(y')}\ip).\ &&\label{(2)}
\end{eqnarray}
Accordingly $-\ip\overline{\phi_2(y'')}\ip\,\phi_2(z'')\,\ip=\phi_1(-\ip\,\overline{y''}\,\ip\,z''\,\ip)=$ $-c\,\ip\,\overline{y''}\,\ip\,z''\,\ip\,\bar{c}$. For $y''=1=z''$ we obtain $\overline{(\phi_2(1)\,\ip\,c)}\,\ip\,(\phi_2(1)\,\ip\,c)=\ip$. Hence
\begin{equation} b=(\phi_2(1)\,\ip\,c)\in\rr\oplus\rr\,\ip\ {\rm with}\  b\,\bar{b}=1.\label{(3)}
 \end{equation}
 From (\ref{(2)}), we have $-\phi_2(y'')\ip\phi_1(z')\ip=\phi_2(-y''\,\ip\,z'\,\ip)$. For $y''=s=-z'$ with $s\in\{\ip,{\rm j}, {\rm k}\}\subset\hh^R$ we obtain $\phi_2(s)=-\phi_2(s\,\ip\,s\,\ip)\,\ip\,c\,s\,\bar{c}\,\ip=
 \phi_2(1)\,\ip\,c\,\ip\,s\,\ip\,\bar{c}\,\ip$, since $s\,\ip\,s\,\ip=1$ when $s=\ip$ and $s\,\ip\,s\,\ip=-1$ otherwise, while $\ip\,s\,\ip=-s$ when $s=\ip$ and $\ip\,s\,\ip=s$ otherwise. Now, using (\ref{(3)}) we conclude $\phi_2(s)=b\,\ip\,s\ip\,\bar{c}\,\ip$ for $s\in\{\ip,{\rm j}, {\rm k}\}$. From the linearity of $\phi_2$ we conclude finally $\phi_2(z'')=b\,\ip\,z''\ip\,\bar{c}\,\ip$. Accordingly, each automorphism  $\phi:\!(\BB_4^L)^{opp}\!\rightarrow\! (\BB_4^L)^{opp}$ has the form:
 \begin{eqnarray}
 \phi((z',z'')_R)=(c\,z'\,\bar{c}\ ,\ b\,\ip\,z''\ip\,\bar{c}\,\ip)_R, \ {\rm \ a}\ (\BB_4^L)^{opp}{\rm -automorphism}\nn\\
 {\rm for}\ 0\neq c\in\hh^R\ {\rm with}\  c\,\bar{c}=1,\ b\in\rr\oplus\rr\,\ip\ {\rm with}\  b\,\bar{b}=1.\label{B4autCD}
\end{eqnarray}
Similarly, each automorphism $\phi:(\BB_i^L)^{opp}\!\rightarrow\! (\BB_i^L)^{opp}$ with $i\in\{1,2,3\}$ fulfils:
\begin{eqnarray}
 \phi((z',z'')_R)=(b\,z'\,\bar{b}\ ,\ c\,b\,z''\,\bar{b})_R, \ {\rm \ a}\ (\BB_i^L)^{opp}{\rm -automorphism},\ i\in\{1,2,3\}\nn\\
 {\rm for}\ 0\neq c\in\hh^R\ {\rm with}\  c\,\bar{c}=1,\ b\in\rr\oplus\rr\,\ip\ {\rm with}\  b\,\bar{b}=1.\label{B123autCD}
\end{eqnarray}
No isomorphism  $\phi:(\BB_i^L)^{opp}\!\rightarrow (\BB_j^L)^{opp}$ with $i\neq j$ with $\phi((z',z'')_R\, \cdot_{i}\,  (y',y'')_R)=\phi((z',z'')_R)\, \cdot_{j}\,\phi((y',y'')_R)$ can exist. For instance, when $i=4$ and $j=1$ we have $\overline{\phi_2(y'')}\,\phi_2(z'')\,\ip=\phi_1(-\ip\,\overline{y''}\,\ip\,z''\,\ip)=$ $-c\,\ip\,\overline{y''}\,\ip\,z''\,\ip\,\bar{c}$. For the case $z''=1=y''$ we have $\overline{\phi_2(1)}\,\phi_2(1)=-c\,\ip\,\bar{c}\,\ip$, which is unimodular. Calling $b=\phi_2(1)$ we conclude $\bar{b}\,b=1$. For the case $y''\!=1$ we obtain $\phi_2(z'')\!=-b\,c\,z''\,\ip\,\bar{c}\,\ip$. Now, from $\phi_2(-y''\,\ip\,z'\,\ip)\!= -\phi_2(y'')\,\ip\,\phi_1(z')\,\ip$ and the obtained $\phi_2$, it follows $c\ip\bar{c}\!=1$ which can not be fulfilled (similar for the other cases $i\neq j$). Unlike $\oo$, not every automorphism of a proper $\rr$--subalgebra of $\BB_i^L$ extends to an automorphism of $\BB_i^L$.
But each auto\-morphism of $\hh^{\BB_4}$ or of the subalgebra generated by $\w$ in $\BB_4^L$ extends to an automorphism of $\BB_4^L$; each automorphism of any subalgebra isomorphic to $\te$ in $\BB_i^L$ extends to an automorphism of $\BB_i^L$ when $i=1,2,3$.

We just verified that
{\bf the group of automorphisms of  each $\BB_i^L$ algebra is  $ U(1)\times SU(2)$}. This does not follow from the derivation algebras alone since discrete factors may appear in the automorphism group (not revealed by the derivation algebra).
We completed the following classification result:
\begin{theorem} Let ${\cal A}$ be a non--necessarily--associative (NNA) division $\rr$--algebra admitting a grading over a group $G$ with identity $e$, $G\simeq \z_2\times\z_2\times \z_2$ or $G \simeq \z_2\times\z_4$. Let ${\cal A}$ admit a  vector space basis $\{v_a | a\in G\}$  such that  $v_a\cdot v_b =$ $C_{{\cal A}}(a,b)\, v_{ab}$ with structure constant $C_{\cal A}$ satisfying  $C_{\cal A}(a,b)\in\{1,-1\}$ and $C_{\cal A}(a,e)$ $=C_{\cal A}(e,b)=1$ for  all $a,b\in G$, that is, ${\cal A}$ is a twisted group algebra  $({\cal A}; G,\rr, \{1,-1\},C_{\cal A})$. Then ${\cal A}$ is $\rr$--algebra isomorphic to one of the following mutually non isomorphic  algebras $\oo^R, \BB_1^L, \BB_2^L, \BB_3^L$ and $\BB_4^L$ described below:
\begin{itemize}
\item Any twisted group algebra $({\cal A};\z_2\times\z_2\times \z_2,\rr,\{1,-1\},C_{\cal A})$
which is NNA division $\rr$--algebra
  is isomorphic to the octonion algebra $\oo^R$. Under any ordered right--standard basis  with respect to $\hh^R$, ${\cal A}$ has a unique
structure constant $C_{\oo^R}$ underlying
Table VII. So, $\oo^R\!\simeq\!(\oo^R)^{opp}\!=\!\oo^L$. The conjugation and involution $x\mapsto \bar{x}\!=\![x_0,-x_1,-x_2,-x_3,-x_4, -x_5,-x_6,-x_7]_{\oo^R}$ leads to the norm $x\mapsto |x|_{\hh}=(x\bar{x})^{\frac{1}{2}}$ making $\oo^R$ a composition algebra.

\item  Each twisted group algebra $({\cal A};\z_2\times\z_4,\rr,$
$\{1,-1\},$ $C)$ which is NNA division
$\rr$--algebra is isomorphic to one of the following
 four mutually non $\rr$--algebra--isomorphic
  twisted group algebras $(\BB_i^L;\z_2\times\z_4,\rr,$
$\{1,-1\},$ $C_{\BB_i^L})$, for $i=1,2,3,4$. Under an ordered left--standard basis with respect to $\tes$ given by $[ v_0,\cdots,v_7]\!=\![1,\w,\w^2,\w\cdot \w^2,\ipp,\ipp\!\cdot\!\w,\ipp\!\cdot\!\w^2$, $\ipp\!\cdot\!(\w\cdot \w^2)]$ each $\BB_i^L$--algebra has a unique structure constant $C_{\BB_i^L}$ underlying Table XI. The center and nucleus of each $\BB_i^L$--algebra is $\rr 1\simeq\rr$.
All the subalgebras ${\cal A}_\cc\simeq\cc$ of $\BB_i^L$ have the form ${\cal A}_\cc=\rr 1\oplus \rr I$ with $0\neq I  \in$ $\rr \w^2\oplus\rr \ipp\oplus\rr\ipp\!\cdot\!\w^2$. All the $\BB_i^L$--algebras are left and right $\cc$--modules with respect to any of their subalgebras ${\cal A}_\cc\simeq\!\cc$. While $\BB_1^L$, $\BB_4^L$, and $\BB_3^L$  are also $\cc$--bimodules, $\BB_2^L$ is a $\cc$--bimodule with respect to subalgebras ${\cal A}_\cc=\rr 1\oplus \rr I$ where $0\neq I  \in \rr \w^2$ or $0\neq I \in \rr \ipp\oplus\rr \ipp\!\cdot\!\w^2$. In each $\BB_i^L$--algebra there is a unique subalgebra isomorphic to $\hh$, given by $\hh^{\BB_i}:=\rr 1\oplus\rr\w^2$ $\oplus\rr\ipp\oplus\rr\ipp\!\cdot\!\w^2\subset\BB_i^L$ called the  quaternion subalgebra or  set of even elements of  $\BB_i^L$.   $\BB_i^{odd}:=\rr\w\oplus\rr\w\!\cdot\!\w^2\oplus\rr\ipp\!\cdot\!\w\oplus\rr
\ipp\!\cdot\!(\w\!\cdot\!\w^2)\subset\BB_i^L$ is called the  set of odd elements, and satisfies $x^2\cdot x=-x\cdot x^2\ \forall x\in\BB_i^{odd}$. Each $\BB_i^L$ has a $\z_2$--grading with respect to their subsets $\hh^{\BB_i}$ and $\BB_i^{odd}$.

\item  $(\BB_1^L)^{opp}\!\simeq \BB_3^L$, so $(\BB_1^L)^{opp}\!=\!\BB_3^R$. An anti--isomorphism $\BB_1^L\!\rightarrow\!\BB_3^L$ is given in (\ref{DAntiInvB13}).  $(\BB_2^L)^{opp}\!=\!\BB_2^R$ and  $(\BB_4^L)^{opp}\!=\!\BB_4^R$, so they have involutions in (\ref{DAntiInvB2}--\ref{DAntiInvB4}). The algebra $Der(\BB_i^L)$ with $i\!=\!1,2,3$ is generated by (\ref{DAlgB123}--\ref{DAlgB123a}). $Der(\BB_4^L)$ is generated by (\ref{DAlgB4}--\ref{DAlgB4a}). In general, $Der(\BB_i^L)\!\simeq\! u(1)\oplus su(2)$.   In $\BB_1^L$, $\BB_2^L$, and $\BB_3^L$ the automorphisms map generators  $v_{(0,1)}\!\equiv\! \w\!\mapsto\! \w'$ and $v_{(1,0)}\!\equiv\!\ipp\!\mapsto\! \ipp'$ fulfilling (\ref{Cons1}--\ref{Cons2}). The product in $(\BB_i^L)^{opp}$ is given by its Cayley--Dickson decompositions with respect to $\hh^{\BB_i}$ in (\ref{CDB1}--\ref{CDB4}). The sets of automorphisms are given in (\ref{B123autCD}) for $(\BB_i^L)^{opp}$ with $i=1,2,3$, and in (\ref{B4autCD}) for $(\BB_4^L)^{opp}$. In general, $Aut(\BB_i^L)\!\simeq U(1)\!\times\! SU(2)$, and not every automorphism in of any proper $\rr$--subalgebra of $\BB_i^L$ extends to be in $Aut(\BB_i^L)$. But each automorphism of $\hh^{\BB_4}$ or of the subalgebra generated by $\w$ in $\BB_4^L$ does extend to an automorphism of $\BB_4^L$; Each automorphism of any subalgebra isomorphic to $\te$ in $\BB_i^L$ extends to an automorphism of $\BB_i^L$ when $i=1,2,3$.
\end{itemize}
\end{theorem}

The
function  $x\mapsto\! |x|_{\BB_i^L}:=(\det R(x))^{1/8}\ \forall x\in\BB_i^L$ is a {\bf norm}, see Appendix A.
Let $\hat{{\cal B}}$  be a subalgebra of $\BB_i^L$ with $\hat{{\cal B}}\simeq\hh$ or $\hat{{\cal B}}\simeq\te$. A diagonal operation $x\mapsto\check{x}$ (see Def. 6) such that  $x\cdot\check{x}\in\hat{{\cal B}}$ $\forall x\in\BB_i^L$  fulfils as well that $(x\cdot\check{x})\cdot\overline{(x\cdot\check{x})}\in\hh^{\BB_i}$ and $(x\cdot\check{x},y\cdot\check{y},z\cdot\check{z})\in\BB_i^{odd}\ \forall x,y,z\in\BB_i^L$. From these constraints, for $\BB_i^L$ with $i=1,3,4$ such an operation $x\mapsto\check{x}$ has to be the function $x\mapsto\!\bar{x}$  in (\ref{ConjBi}), while for $\BB_2^L$ it has to be the function $x\mapsto\!\hat{x}$ in (\ref{ConjB2}):
\begin{eqnarray}
&\bar{x}:=&\!\!\![x_0,-x_1,-x_2,-x_3,-x_4,-x_5,-x_6,-x_7]_{\BB_i^L}\nn\\
&&=(\overline{X_{Even}},-X_{Odd})_{\BB_i^L}=\{ \overline{X_{Left}},-X_{Right}\}_{\BB_i^L},\ i=1, 2, 3, 4.\label{ConjBi}\\
&\hat{x}:=&\!\!\![x_0,-x_1,-x_2,-x_3,x_4,-x_5,x_6,-x_7]_{\BB_2^L}=-((\w^2\!\cdot \bar{x})\!\cdot (\!\ipp\!\cdot \w^2))\!\cdot\! \ipp.\ \  \ \label{ConjB2}
\end{eqnarray}
 In general $x\cdot\bar{x}=\bar{x}\cdot x\in\rr\oplus\w^2\rr\simeq\cc\ \forall x\in\BB_1^L$ or $\BB_3^L$; $x\cdot\bar{x}=\bar{x}\cdot x\in \hh^{\BB_4}\ \forall x\in\BB_4^L$; and
 $x\cdot\hat{x},\ \hat{x}\cdot x\in\hh^{\BB_2}\ \forall x\in\BB_2^L$.
 The function $x\mapsto \bar{x}$  in  (\ref{ConjBi}) is a actually a diagonal conjugation in all $\BB_i^L$--algebras (see Def. 6) since  $|x|_{\BB_i^L}^8=\det R(x)$ and
\begin{equation}
|x|_{\BB_i^L}^4=x\cdot\left(\bar{x}\cdot \overline{(x\cdot \bar{x})}\right)=\left(\overline{(x\cdot \bar{x})}\cdot \bar{x}\right)\cdot x\ \
\forall x\in \BB_i^L,\ \ i=1,2,3,4.
\label{NormBId}\end{equation}
We have also $|x|_{\BB_i^L}^8=\!\det R(x)=\!\det R_{\hh}(x\cdot \bar{x})\ \forall x\in\BB_i^L$ with $i=1,3,4$.
The function $x\mapsto \hat{x}$ is also a diagonal conjugation in  $\BB_2^L$ since $|x|_{\BB_2^L}^8=\!\det R(x)=\!\det R_{\hh}(x\cdot \hat{x})\ \forall x\in\BB_2^L$.
There are further quartic identities  expressing the norm:
\begin{eqnarray}
|x|_{\BB_i^L}^4&\!\!=&\!\!(x\cdot \bar{x})\cdot\overline{(x\cdot \bar{x})}=
\overline{(x\cdot(x\cdot\bar{x}))}\cdot x\ = \overline{(\bar{x}\cdot x^2)}\cdot x\nn\\
&\!\!=&\!\!x\cdot\overline{((x\cdot\bar{x})\cdot x)}\ =\ x\cdot \overline{( x^2\cdot\bar{x})}\ \ \forall x\in \BB_i^L,\ i=1,3,4.\label{QuartIdB134}\\
|x|_{\BB_2^L}^4
&=&(x\cdot \hat{x})\cdot\overline{(x\cdot \hat{x})}=(\hat{x}\cdot
x)
\cdot\overline{(\hat{x}\cdot  x)}\nn\\
&=&\widehat{\left(x\cdot\overline{(\widehat{\hat{x}\cdot
x})}\right)}\cdot x\ =x\cdot
\widehat{\left(\overline{(\widehat{x\cdot\hat{x}})}\cdot
x\right)}\ \ \forall x\in \BB_2^L.\label{QuartIdB2}
\end{eqnarray}

In
general $|x\cdot y|_{\BB_i^L}\neq |x|_{\BB_i^L}|y|_{\BB_i^L}$, since $\tes\subset\BB_i^L$, but even or odd
factors produce strict equalities, leading to {\bf pure factor composition equalities}:
\begin{equation}
\forall\ x,y\in
\BB_i^L,\ x\ {\rm or}\ y\ \in\hh^{\BB_i}\cup\BB_i^{odd}:\ |x\cdot y|_{\BB_i^L}=
| y\cdot x|_{\BB_i^L}=|x|_{\BB_i^L}\,|y|_{\BB_i^L}.\label{SEPure}
\end{equation}
From  $|\w|_{\BB_i^L}=|\ipp|_{\BB_i^L}=1$,  all elements in the   left--standard basis  have unit norm.

A particular case of the identities (\ref{Tesseranity1}) provide cubic identities in one variable involving conjugates that {\bf hold  in all the $\BB_i^L$--algebras}:
\begin{equation}
x\cdot(x\cdot\bar{x})=
\bar{x}\cdot x^2,\ \
(x\cdot\bar{x})\cdot x =
x^2\cdot\bar{x},\ \ \forall x\in \BB_i^L.\label{QubId1}
\end{equation}

The  cubic identities  in two variables involving conjugation in (\ref{Tesseranity1}) for $\tes$ {\bf do not hold  in  any $\BB_i^L$--algebra}. Nevertheless
they hold if  $x$ is  even:
\begin{equation} \bar{x}\cdot(x\cdot y)=y\cdot(x\cdot \bar{x}),\ \
(y\cdot x)\cdot\bar{x}=(x\cdot \bar{x})\cdot y,\ \ \forall x,y\in \BB_i^L,\ x\in\hh^{\BB_i}.\label{PETesty1}
\end{equation}

The identities (\ref{Tesseranity1}) used to obtain the first part of the composition inequalities (\ref{SITes}) for $\tes$ do not hold in any $\BB_i^L$--algebra. Nevertheless,  $\forall x,y\in\BB_i^L$:
\begin{eqnarray}
|\bar{x}\cdot(x\cdot y)|_{\BB_i^L}=|y\cdot(x\cdot\bar{x})|_{\BB_i^L}&& {\rm if}\ i=1,4;\\
|(y\cdot x)\cdot\bar{x}|_{\BB_i^L}=|(x\cdot\bar{x})\cdot y|_{\BB_i^L}&& {\rm if}\ i=3,4;\\
\bar{x}\cdot x=x\cdot\bar{x}\in\hh^{\BB_i},\ \ |x\cdot\bar{x}|_{\BB_i^L}=(|x|_{\BB_i^L})^2 &&{\rm if}\ i=1,3,4.
 \end{eqnarray}
 These properties and (\ref{SEPure}) enough to complete an analogous result for the $\BB_i^L$--algebras with $i=1,3,4$ along the lines of Theorem 3. A partial result for the $\BB_2^L$--algebra is also obtained. The inequalities in (\ref{SIB134}-\ref{SIB2}) will be called  the {\bf composition inequalities} of the corresponding algebras:
\begin{eqnarray}
|x|_{\BB_i^L}\leq |x_{even}|_{\BB_i^L}+|x_{odd}|_{\BB_i^L}\leq 2^{\frac{3}{4}}\,|x|_{\BB_i^L}, && \forall x\in\BB_i^L,\label{TIBi}\\
2^{-\frac{3}{4}}\,|x|_{\BB_i^L}| y|_{\BB_i^L}\leq|x\cdot y|_{\BB_i^L}\leq 2^{\frac{3}{4}}\,|x|_{\BB_i^L}| y|_{\BB_i^L}, && \forall x,y\in\BB_i^L,\ i=1,3,4.\label{SIB134}\\
|x\cdot y|_{\BB_2^L}\leq 2^{\frac{3}{4}}\,|x|_{\BB_2^L}| y|_{\BB_2^L}, && \forall x,y\in\BB_2^L.\label{SIB2}
\end{eqnarray}

Let $U_{\BB_i^L}:=\{y\in \BB_i^L\,:|y|_{\BB_i^L}=1\}$. For each $x$ in $U_{\BB_i^L}\cap \hh^{\BB_i}$ or  $U_{\BB_i^L}\cap{\BB_i}^{odd}$ we have $x\cdot U_{\BB_i^L}=\!U_{\BB_i^L}\cdot x^{-1}\!=(x\cdot U_{\BB_i^L})\cdot x^{-1}=\! U_{\BB_i^L}$. This  leads to
considerations analogous to the Hopf fibrations \cite{LWTetal}.

The {\bf cubic tesseranity identities} not involving conjugates
in (\ref{Tesseranity3Lin}--\ref{Tesseranity3}) are not fulfilled in general by $\BB_1^L$, $\BB_2^L$, and $\BB_3^L$. Now, $\BB_4^L$ {\bf fulfils} (\ref{Tesseranity3}) but not (\ref{Tesseranity3Lin}):
\begin{equation}
(x,x,y)=(y,x,x),\ \  \forall\  x,y\ \in \BB_4^L.\label{Tesseranity3B4}
\end{equation}
$\BB_2^L$ does not satisfy any nontrivial linear combination of the quartic identities (\ref{Tesseranity4y5})  that hold for $\tes$. The other $\BB_i^L$--algebras  satisfy them:
\begin{equation}(x,x,x^2)=(x^2,x,x)=(x,x^2,x),\ \ \forall\  x\in \BB_i^L,\ i=1,3,4.\label{Tesseranity4y5B134}
\end{equation}

Recall the quartic identities (\ref{Tesseranity22a}-\ref{Tesseranity22b}) satisfied by $\tes$. The identity (\ref{Tesseranity22a}) is satisfied in $\BB_1^L$, but not in any of the other $\BB_i^L$--algebras, while the identity (\ref{Tesseranity22b}) is satisfied in $\BB_3^L$ but not in any of the other $\BB_i^L$--algebras. $\BB_4^L$  satisfies the subtraction of  these identities (\ref{Tesseranity22a}-\ref{Tesseranity22b}), while $\BB_2^L$ does not satisfy any nontrivial linear combination of them. None of the $\BB_i^L$--algebras satisfy (\ref{Tesseranity5lin}--\ref{Tesseranity9lin}).
 \begin{eqnarray}
&&(x,x,y^2)=y\cdot(x,x,y)+(x,x,y)\cdot y\ \ \forall x,y\in\BB_1^L;\label{Id22B1}\\
&&(y^2,x,x)=y\cdot(y,x,x)+(y,x,x)\cdot y\ \ \forall x,y\in\BB_3^L;\label{Id22B3}\\
&&(x,x,y^2)-y\cdot(x,x,y)-(x,x,y)\cdot y\nn\\
&&\ \ =(y^2,x,x)-y\cdot(y,x,x)-(y,x,x)\cdot y\ \ \forall x,y\in\BB_4^L.\label{Id22B4}
\end{eqnarray}
The identities fulfilled and those not fulfilled assert again that the $\BB_i^L\not\simeq \BB_j^L$ for $i\neq j$. More general identities satisfied by the $\BB_i^L$--algebras are studied in \cite{LWTetal}.

We consider the algebras $(\BB_i^L)^-$ with product $[x,y]:=(x\cdot y-y\cdot x)/2$. The $\BB_i^L$--algebras are neither Lie-- nor Malcev--admissible, but $\BB_4^L$ is Binary Lie admissible: any pair of elements in $(\BB_4^L)^-$ generate a Lie subalgebra \cite{Maltsev}. In $(\BB_4^L)^-$ both $\{\w,\w^3\}$ and $\{\ipp\w,\ipp\w^3\}$ generate copies of the Heisenberg algebra.

\begin{theorem} Let the $\BB_i^L$--algebras with $i= 1,2,3,4$ be the mutually non isomorphic NNA division $\rr$ algebras which are  twisted group algebras over $\rr$ of the form $(\BB_i^L; \z_2\times\z_4,\rr, \{1,-1\},C_{\BB_i^L})$ described in Theorem 5. Then:
\begin{itemize}
\item In each $\BB_i^L$--algebra  $\det R(x)=\det L(x)\ \forall x\in\BB_i^L$  (see (\ref{DetMRML})) and the function $x\mapsto |x|_{\BB_i^L}=(\det R(x))^{1/8}$ is  a norm (see Appendix A). A diagonal operation $x\mapsto\check{x}$  such that $x\cdot\check{x}$ belongs always to a fixed subalgebra isomorphic to $\hh$ or $\te$ has to be the function $x\mapsto\bar{x}$ in (\ref{ConjBi}) for $\BB_i^L$ with $i=1,3,4$; and it has to be the function $x\mapsto\hat{x}$ in (\ref{ConjB2}) for $\BB_2^L$. The function $x\mapsto\bar{x}$ is actually a conjugation in all the $\BB_i^L$--algebras, and $x\mapsto\hat{x}$ is a conjugation in $\BB_2^L$. In general $x\cdot\bar{x}=\bar{x}\cdot x\in\rr\oplus\w^2\rr\simeq\cc\ \forall x\in\BB_1^L$ or $\BB_3^L$; $x\cdot\bar{x}=\bar{x}\cdot x\in \hh^{\BB_4}\ \forall x\in\BB_4^L$; and
 $x\cdot\hat{x},\ \hat{x}\cdot x\in\hh^{\BB_2}\ \forall x\in\BB_2^L$.  The norm relates to  quartic identities involving conjugates in (\ref{NormBId}--\ref{QuartIdB2}). Each $\BB_i^L$--algebra  satisfies  cubic identities involving conjugates  in (\ref{QubId1}-\ref{PETesty1}). The pure factor composition equalities in (\ref{SEPure}) hold in each $\BB_i^L$--algebra. The inequalities in (\ref{TIBi}) hold in each $\BB_i^L$--algebra. The composition  inequalities  in (\ref{SIB134}) hold for $\BB_i^L$ with $i=1,3,4$; the one in (\ref{SIB2}) holds for $\BB_2^L$.

\item The algebra $\BB_4^L$  satisfies the cubic tesseranity identity in two variables not involving conjugates in (\ref{Tesseranity3B4}) while none of the other $\BB_i^L$'s satisfies it. The algebras $\BB_1^L, \BB_3^L$, and $\BB_4^L$ satisfy the quartic identities in one variable not involving conjugates in (\ref{Tesseranity4y5B134}) while $\BB_2^L$ does not satisfy any nontrivial linear combination of them. The identity (\ref{Id22B1}) is satisfied in $\BB_1^L$, but not in any of the other $\BB_i^L$--algebras, while the identity (\ref{Id22B3}) is satisfied in $\BB_3^L$ but not in any of the other $\BB_i^L$--algebras. $\BB_4^L$ fulfills (\ref{Id22B4}). The $\BB_i^L$--algebras are neither Lie-- nor Malcev--admissible, but $\BB_4^L$ is Binary Lie admissible.
\end{itemize}
\end{theorem}

\section{Conclusions and further applications}

The graph II summarized the results of this paper. Further  classification tasks are completed in \cite{PapII}--\cite{PapIII}. Several developments already carried out for the alternative NNA division algebras will be addressed in the paper series \cite{LWTetal}. A question to be addressed there will be which  identities characterize the new algebras. In particular, which identities provide the basic tools to perform a wide variety  of algebraic manipulations in the corresponding algebra (such as those required for solving linear equations, solving geometrical or analytical constructions, etc.). Possed in a different way,  we ask which are the generators of an ideal of the free nonassociative $\rr$--algebra in some variables so that the quotient by this ideal provides the considered algebra.
The  $\rr$--algebra $\cc$
 could  be characterized as the commutative division $\rr$--algebra of lowest dimension having solutions for the equation $x^2+1=0$
 (beyond the possibilities of $\rr$), and with it
 providing an algebraically closed field. Furthermore, $\cc\simeq\rr[x]/(x^2+1)$. The quaternions and octonions
 can solve the equation in two variables $(xy-yx)^2+4=0$ (beyond
 the possibilities of $\cc$). Now, $\te$ and the $\BB_i$--algebras solve the equation $(x\cdot x^2-x^2\cdot x)x-2=0$ (e.g., $x=\w$) which can not be solved
 in any alternative $\rr$--algebra.  $\BB_2^R$  can solve $(x\cdot(x\cdot x^2)+(x^2\cdot x)\cdot x -2\,x^2\cdot x^2)^2+64=0$ (e.g., $x=\w+\ipp$), but no algebra satisfying  identities (\ref{Tesseranity4y5}) or (\ref{Tesseranity4y5B134}) can solve it.

 Some of the identities fulfilled by an algebra have an origin in  identities fulfilled by some functions of its structure constant characterizing its nonassociativity or its noncommutativity \cite{LWTMScthesis}-\cite{SPJMS}. Identities relating to cohomological properties of such functions will be tackled in \cite{LWTetal}. We study  there also
discrete versions of the obtained algebras (analogous to
Lipschiftz and Hurwitz integer quaternions, and octavian integer octonions, see \cite{Conway}). The Cayley--Dickson decompositions of the obtainded NNA division algebras provide a frame  characterizing certain families of NNA division algebra extensions. An examination of such extensions of division algebras, their automorphisms, and their use to represent some NNA algebras and loops is addressed in \cite{LWTetal}.

A relation has been established between the
normed division algebras, their
triality maps, with  spinors and super Yang--Mills theories (and super--gravity theories) in
3, 4, 6, or 10 Minkowski space-time dimensions (see
\cite{BaezSusy} and references therein). A
$\z_4\times\z_4$--graded extension of the Poincar\'e algebra called
the clover extension was introduced in \cite{Lawt1}-\cite{Lawt5}, which is analogous
to supersymmetry.
It led to the problem of finding  division algebras corresponding to the novel space-time
symmetries. The classified NNA division algebras pose the reciprocal task of
determining whether there are space-time symmetries related to them. The algebraic structure behind the group representations in the Standard Model of particle physics is a subject of renewed interest, for a mathematician's account see \cite{BaezHuerta}. We remark that the group of automorphisms of the $\BB_i$--algebras coincides with the internal symmetries in electro--weak interactions. For the use of the novel algebras in (projective) geometry and physics see \cite{LWTetal}.\\

\noindent {\bf Acknowledgements:} This work acquired maturity and some conciseness thanks to  profound recommendations of a referee. E.g. one such a recommendation led to Proposition 2(i), another one led to identities (\ref{Tesseranity8lin}--\ref{Tesseranity9lin}).

\appendix
\section{ Family of norms}
We test the composition equality ($|x\cdot y|=|x|\,|y|\ \forall x,y$) in $\tes$:
\begin{eqnarray}
p:=[1,1,0,0]_{\tes},\ q:=[1,-1,0,0]_{\tes},\ s:=[1,1,1,0]_{\tes},\
t:=[1,-1,1,0]_{\tes},\nn\\
|p|_{\tes}\,|p|_{\tes}\!-|p\cdot p|_{\tes}=-16,\
|p|_{\tes}\,|q|_{\tes}\!-|p\cdot q|_{\tes}=0,\ |s|_{\tes}\,
|t|_{\tes}\!-|s\cdot t|_{\tes}=8.\nn
\end{eqnarray}
The {\bf composition inequality} is {\bf not fulfilled}. We test the triangle inequality:
\begin{eqnarray}
&&\!\!\!|p|_{\tes}+|p|_{\tes}-|p+p|_{\tes}=0,\
|p|_{\tes}+|q|_{\tes}-|p+ q|_{\tes}=2\,(2^{1/4}-1)>0,\nn\\
&&\!\!\!|s|_{\tes}+|t|_{\tes}-|s+t|_{\tes}=2\,(5^{1/4}-2^{1/2})>0.\nn\end{eqnarray}
We will define iteratively a family of scalar functions and prove they are norms. This determines norms for $\tes$ and the $\BB_i^L$-- NNA division $\rr$--algebras:
\begin{theorem} The following  functions $M_j$ for $j=2,3,\cdots$, are norms
\begin{eqnarray}
M_1(a^{(1)})&\!\!\!\!:=&\!\!\!(a_1^2+a_2^2+\cdots+a_n^2)^{\frac{1}{2}},\\
M_2((a^{(1)},a^{(2)}))&\!\!\!\!:=&\!\!\!(M_1(a^{(1)})^4+M_1(a^{(2)})^4)^{\frac{1}{4}},\label{M2Def}\\
\!\!\!\!M_3(((a^{(1)},a^{(2)}),(a^{(3)},a^{(4)})))&\!\!\!\!:=&\!\!\!(M_2((a^{(1)},a^{(2)}))^8\!+\!
M_2((a^{(3)},a^{(4)}))^8)^{\frac{1}{8}},\ \ \\
\cdots,\ M_j((u,r))&\!\!\!\!:=\!\!\!&(M_{j-1}(u)^{2^j}+M_{j-1}(r)^{2^{j}})^{1/2^{j}},\ \ \label{MjDef}\\
{\rm where}\  a^{(1)}=(a_1,a_2,\cdots, a_n)&,&a^{(2)}=(a_{n+1},a_{n+2},\cdots,
a_{2n})\ ,\cdots,\nn\nn\\
a^{(m)}&\!\!\!\!=&\!\!\!(a_{n(m-1)+1},a_{n(m-1)+2},\cdots,
a_{nm})\ {\rm for}\ m=2^j.\nn
\end{eqnarray}
\end{theorem}
\begin{proof} $M_2, M_3,\cdots$ fulfil
positive homogeneity
$M_j(\alpha\,x)=|\alpha|M_j(x)$, and positive definiteness
$(M_j(x)\!=\!0\Longrightarrow x\!=\!0)\ \forall \alpha\in \rr,
x\in \rr^{n\,2^{j-1}}$. We prove by induction
\begin{equation}
M_j(x+y)\leq  M_j(x)+M_j(y),\    \forall x,y\in \rr^{n\,2^{j-1}} \
{\rm (triangle\ inequality)}.\label{TriaIneq}
\end{equation}
 We quote first a particular version of Holder's inequality:
 \begin{equation}
|\rho_1||\xi_1|+|\rho_2||\xi_2|\leq
(|\rho_1|^p+|\rho_2|^p)^{\frac{1}{p}}\,(|\xi_1|^q+|\xi_2|^q)^{\frac{1}{q}},\label{HoldIneq}
\end{equation}
for $\rho_1,\rho_2,\xi_1,\xi_2\in \rr$, $\frac{1}{p}+\frac{1}{q}=1$, and $p,q\in \rr^{+}$.
$M_1$ is
an Euclidean norm. From its triangle inequality and from (\ref{M2Def}), we obtain
\begin{eqnarray}
&&M_1(a^{(1)}+b^{(1)})^4\leq(M_1(a^{(1)})+M_1(b^{(1)}))^4,\\
&&M_2((a^{(1)}+b^{(1)},a^{(2)}+b^{(2)}))^4=M_1(a^{(1)}+b^{(1)})^4+M_1(a^{(2)}+b^{(2)})^4\nn\\
&&\leq
(M_1(a^{(1)})+M_1(b^{(1)}))^4+(M_1(a^{(2)})+M_1(b^{(2)}))^4\nn\\
&&=
M_1(a^{(1)})^4+M_1(a^{(2)})^4+M_1(a^{(2)})^4+M_1(b^{(2)})^4\nn\\
&&\ \ \ +4[M_1(a^{(1)})^3\,M_1(b^{(1)})+
M_1(a^{(2)})^3\,M_1(b^{(2)})]\nn\\
&&\ \ \ +6[M_1(a^{(1)})^2\,M_1(b^{(1)})^2+
M_1(a^{(2)})^2\,M_1(b^{(2)})^2]\nn\\
&&\ \ \ +4[M_1(a^{(1)})\,M_1(b^{(1)})^3+
M_1(a^{(2)})\,M_1(b^{(2)})^3].\label{Lines}
\end{eqnarray}
We take $|\rho_i|=M_1(a^{(i)})^3$, $|\xi_i|=M_1(b^{(i)})$, for
$i=1,2$ and use Holder's inequality (\ref{HoldIneq}) for
$p=\frac{4}{3}$ and $q=\frac{4}{1}$ in the first factor in square
brackets in (\ref{Lines}):
\begin{eqnarray}
&&[M_1(a^{(1)})^3\,M_1(b^{(1)})+
M_1(a^{(2)})^3\,M_1(b^{(2)})]\nn\\
&&\leq(M_1(a^{(1)})^4+M_1(a^{(2)})^4)^{\frac{3}{4}}\,(M_1(b^{(1)})^4+M_1(b^{(2)})^4)^{\frac{1}{4}}\nn\\
&&\ \ =M_2((a^{(1)},a^{(2)}))^{3}\,M_2((b^{(1)},b^{(2)})).
\end{eqnarray}
Similar inequalities are obtained for the second and third factor
in square brackets in (\ref{Lines}), and we can write finally:
\begin{equation}
M_2((a^{(1)}+b^{(1)},a^{(2)}+b^{(2)}))^4\leq [M_2((a^{(1)},a^{(2)}))+M_2((b^{(1)},b^{(2)}))]^4,\label{TesNorm}
\end{equation}
from which the inequality
(\ref{TriaIneq}) for $j=2$ follows. We assume (induction
hypothesis) that (\ref{TriaIneq}) holds. Using it and the definition of $M_{j+1}$ we obtain
\begin{eqnarray}
&&M_j(u+v)^{2^{j+1}}\leq(M_j(u)+M_j(v))^{2^{j+1}},\\
&&M_{j+1}((u+v,w+x))^{2^{j+1}}=M_j(u+v)^{2^{j+1}}+M_j(w+x)^{2^{j+1}}\nn\\
&&\ \ \ \leq
(M_j(u)+M_j(v))^{2^{j+1}}+(M_j(w)+M_j(x))^{2^{j+1}}\nn\\
&&\ \ \ =
M_j(u)^{2^{j+1}}+M_j(v)^{2^{j+1}}+M_j(w)^{2^{j+1}}+M_j(x)^{2^{j+1}}\nn\\
&&\ \ \ \ \ \ +\binom{2^{j+1}}{1}[M_j(u)^{2^{j+1}-1}\,M_j(v)+
M_j(w)^{2^{j+1}-1}\,M_j(x)]+\cdots\nn\\
&&\ \ \ \ \ \  +\binom{2^{j+1}}{2^{j+1}-1}[M_j(u)\,M_j(v)^{2^{j+1}-1}+
M_j(w)\,M_j(x)^{2^{j+1}-1}]\ \ \label{Lines1}\\
&&\ \ \ \leq M_{j+1}((u,w))^{2^{j+1}}+M_{j+1}((v,x))^{2^{j+1}}\nn\\
&&\ \ \ \ \ \ +\binom{2^{j+1}}{1}[M_{j+1}((u,w))^{2^{j+1}-1}\,M_{j+1}((v,x))]+\cdots\nn\\
&&\ \ \ \ \ \ +\binom{2^{j+1}}{2^{j+1}-1}[M_{j+1}((u,w))\,M_{j+1}((v,x))^{2^{j+1}-1}]\ \ \label{Lines2}\\
&&\ \ \ =[M_{j+1}((u,w))+M_{j+1}((v,x))]^{2^{j+1}},
\end{eqnarray}
where we used
the Holder's inequality for each square bracketed term in order
to move from (\ref{Lines1}) to (\ref{Lines2}). So, the triangle inequality holds for $M_{j+1}$.
\end{proof}


\label{lastpage}

\end{document}